\newtheorem{thm}{Theorem}[section]
\newtheorem{theorem}[thm]{Theorem}
\newtheorem{lemma}[thm]{Lemma}
\newtheorem{corollary}[thm]{Corollary}
\newtheorem{proposition}[thm]{Proposition}
\theoremstyle{definition}
\newtheorem{example}[thm]{Example}
\newcommand{\CC}{\mathcal{C}}  
\newcommand{\II}{\mathcal{I}}  
\newcommand{\OO}{\mathcal{O}}  
\newcommand{\Prob}{\mathbb{P}}  
\DeclareMathOperator{\Unif}{Unif}  
\DeclareMathOperator{\tr}{tr}  
\newcommand{\overbar}[1]{\mkern 1.5mu\overline{\mkern-1.5mu#1\mkern-1.5mu}\mkern 1.5mu}
\def\1{\mathds{1}} 
\def\abs#1{\left\lvert#1\right\rvert} 
\newcommand{\de}{\mathrm{\,d}}
\newcommand\stackrelmt[2]{\stackrel{\mathclap{\text{#1}}}{#2}}
\newcommand{\bsX}{\boldsymbol{X}}
\newcommand{\bsY}{\boldsymbol{Y}}
\author{Marcus Rockel}
\title{Measures of association for approximating copulas}
\begin{document}
\maketitle
\vspace{-5ex}

\begin{center}
	\small\textit{
		Department of Quantitative Finance,\\
		Institute for Economics, University of Freiburg,\\
		Rempartstr. 16,	79098 Freiburg, Germany,\\
        marcus.rockel@finance.uni-freiburg.de \\[2mm]
	}
\end{center}
\begin{abstract}
    This paper studies closed-form expressions for multiple association measures of copulas commonly used for approximation purposes, including Bernstein, shuffle--of--min, checkerboard and check--min copulas.
    In particular, closed-form expressions are provided for the recently popularized Chatterjee's $\xi$, which quantifies the dependence between two random variables.
    Given an absolutely continuous bivariate copula $C$ with TP$_2$ density and approximating $n\times n$-checkerboard copula $C_n$, we show that $\xi(C_n) \le \xi(C)$ with $\xi(C_n) \to \xi(C)$ as $n\to\infty$.
\end{abstract}
\vspace{0ex}
\textbf{Keywords }~Bernstein, checkerboard, Chatterjee's xi, Kendall's tau, Spearman's rho, shuffle--of--min, tail dependence

\section{Introduction}
\label{sec:introduction}

Measures of association—most prominently Spearman's rho and Kendall's tau—are fundamental tools for studying statistical dependence, and \cite{chatterjee2020} and \cite{chatterjee2021} popularized Chatterjee's xi to capture arbitrary functional dependence.
Closed--form expressions for these statistics, however, exist only for a handful of copula families (see \cite[Table~6]{ansari2024dependence}); in general one must resort to numerical or sampling–based procedures.
Near the boundaries of the unit square, such procedures often become unstable because they require evaluating (conditional) distribution functions where numerical precision can quickly become poor.
We therefore study measures of association for several popular approximation families, which we introduce in Section~\ref{sec:basic_concepts_and_notation}.

Bernstein and checkerboard constructions, in particular, have a rich history and broad practical use, see, e.g., \cite{cottin2014bernstein,kuzmenko2020checkerboard,li1997approximation,li1998strong,sancetta2004bernstein,sukeda2023minimum}.
Closed–form formulas for Spearman's rho and Kendall's tau are already known for these families, the most elegant arguably appearing in \cite{durrleman2000copulas}.
In Section~\ref{sec:explicit_measures_of_association}, we extend this catalogue by deriving explicit formulas for Chatterjee's xi not only for Bernstein and checkerboard copulas, but also for the check--min and check--$w$ variants, whose grid cells exhibit perfect dependence.
We additionally collect complete closed–form expressions for Spearman's rho, Kendall's tau, and the tail–dependence coefficients, thereby unifying and extending earlier results.

In Section~\ref{sec:checkerboard_bounds}, we focus on bounding Chatterjee's xi via checkerboard approximations.
Combining Proposition~\ref{lem:checkerboard_measures_of_association} with Theorem~\ref{thm:xi_bounds}, we establish the inequality
\begin{equation}\label{eq:xi_bound}
    \xi(C^\Delta_\Pi)
    = \frac{6m}{n}\tr\left(\Delta^{\top}\Delta M_{\xi}\right) - 2
    \le \xi(C)
\end{equation}
for absolutely continuous copulas $C$ with TP$_2$ density, where $\Delta$ is the $m\times n$-matrix of copula masses on an equally spaced grid, $M_{\xi}$ is defined by $M_{\xi} = TT^\top + T^\top + \tfrac13 I_{n}$ with $T_{i,j} = \1_{\{i<j\}}$, and $C^\Delta_\Pi$ is the checkerboard copula associated with $\Delta$.
We show that the MTP$_2$ assumption is critical, and that an upper bound via check--min copulas is not even possible under this positive-dependence assumption.
Replacing~$C$ by its associated checkerboard copula thus furnishes a practical estimator of $\xi$ from an analytic copula, but also from empirical data.
In Theorem~\ref{thm:xi_estimator} we prove that the resulting sample-based estimator converges to the true value of $\xi(C)$ and is computable in time $\mathcal{O}(n\log n)$.
Checkerboard estimators for dependence measures have been recently investigated in a broader setting in \cite[Section~4]{Ansari-Fuchs-2023}, but the explicit formulas derived here allow a finer–grained analysis and faster finite–sample performance.
We conclude with an empirical comparison between our estimator and the classical nearest-neighbor-based estimater of Azadkia and Chatterjee in \cite{chatterjee2021}.

\section{Preliminaries}
\label{sec:basic_concepts_and_notation} 

In this section, we introduce the basic concepts and notation used that are required to formulate the main results of this paper.
First, we introduce the fundamental concept of a copula, before focusing on the specific types of copulas that are of interest in this paper.
Finally, we introduce the studied measures of association.

\subsection{Copulas}
\label{subsec:copulas}

A \emph{bivariate copula} is a function \(C\colon [0,1]^2 \to [0,1]\) that is \emph{grounded} (i.e., \(C(u,0)=C(0,v)=0\) for all \(u,v\in[0,1]\)), \emph{\(2\)-increasing} (meaning that for every \(0\le u_1\le v_1\le 1\) and \(0\le u_2\le v_2\le 1\) it holds that \(C(v_1,v_2)-C(u_1,v_2)-C(v_1,u_2)+C(u_1,u_2)\ge 0\)), and has uniform marginals (so that \(C(u,1)=u\) and \(C(1,v)=v\) for all \(u,v\in[0,1]\)).
Sklar's theorem (see, e.g., \cite[Theorem~2.3.3]{Nelsen-2006}) states that for any bivariate distribution function \(F\) with univariate marginals \(F_1\) and \(F_2\), there exists a copula \(C\) such that
\begin{equation}\label{eq:sklar}
F(x_1,x_2)=C\left(F_1(x_1),F_2(x_2)\right)\quad \text{for all } (x_1,x_2)\in\mathbb{R}^2,
\end{equation}
and \(C\) is uniquely determined on \(\operatorname{Ran}(F_1) \times \operatorname{Ran}(F_2)\). Conversely, if \(C\) is any copula and \(F_1\), \(F_2\) are univariate distribution functions, then the function defined by \eqref{eq:sklar} is a bivariate distribution function.

Denote by \(\CC_2\) the collection of all bivariate copulas. Important examples include the \emph{independence copula} $\Pi(u,v)=uv$, the \emph{upper Fr\'echet bound} $M(u,v)=\min\{u,v\}$ and the \emph{lower Fr\'echet bound} $W(u,v)=\max\{u+v-1,0\}$.
Classically, if $(X,Y)\sim C$, the upper and lower Fr\'echet bounds represent the extreme cases of dependence with perfect co- and countermonotonicity, respectively, whilst the independence copula represents the case of no dependence at all between $X$ and $Y$.
Furthermore, for any \(C\in\CC_2\), it holds that $W\le C\le M$ pointwise on $[0,1]^2$, see standard references such as \cite{Nelsen-2006} or \cite{Durante-2016}.

\subsection{Bernstein copulas}
\label{subsec:bernstein_copulas}

Bernstein copulas were first considered by Li et al. \cite{li1998strong} and then considered by Sancetta and Satchell \cite{sancetta2004bernstein} as flexible, computable copulas for approximating dependence structures.
Let $C$ be a given bivariate copula and let $D$ be an $m\times n$-matrix defined by
\begin{align}\label{eq:matrix_copula}
    D_{i,j} = C\left(\frac{i}{m}, \frac{j}{n}\right) 
\end{align}
for $1\leq i\leq m$ and $1\leq j\leq n$.
We refer to $D$ as the $m\times n$-\emph{grid copula matrix associated with $C$} and generally call $D$ a \emph{grid copula matrix} if there exists a copula such that \eqref{eq:matrix_copula} holds for all entries of the matrix.
Next, let $B_{i,m}(u)$ denote the \emph{Bernstein basis polynomial} of degree $m$, defined as
\[
    B_{i,m}(u) = \binom{m}{i}u^i(1-u)^{m-i},
    \quad \text{for } 0\le i\le m,u\in[0,1].
\]
Then, the \emph{Bernstein copula} associated with the grid copula matrix $D$ is defined as
\begin{align}\label{eq:bernstein_copula}
    C^D_B(u,v) = \sum_{i=1}^m \sum_{j=1}^n D_{i,j}B_{i,m}(u)B_{j,n}(v), \quad \text{for } (u,v)\in[0,1]^2.
\end{align}
This function $C^D_B$ is indeed a copula, as shown in \cite[Theorem 1]{sancetta2004bernstein} (see also \cite[Theorem~2.2]{cottin2014bernstein}).
An illustration for a Bernstein approximation of the Clayton copula is given in Figure~\ref{fig:clayton_pdf_and_bernstein}.

\begin{figure}[htb!]
    \centering
    \includegraphics[width=0.4\textwidth]{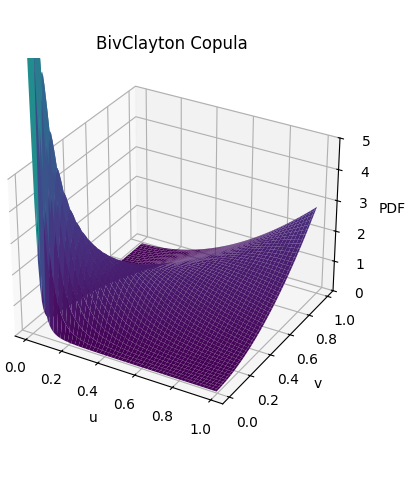}
    \includegraphics[width=0.4\textwidth]{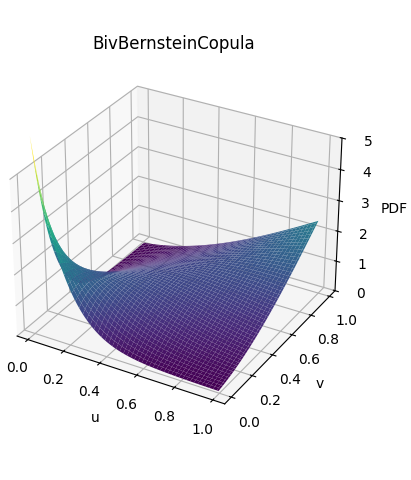}
    \caption{
        Left: Probability density function of the Clayton copula with parameter $\theta=2$.
        Right: Probability density function of the $20\times 20$-Bernstein copula associated with the Clayton copula with parameter $\theta=2$.
        The Bernstein copula is a smooth approximation of the Clayton copula with bounded density.
    }
    \label{fig:clayton_pdf_and_bernstein}
\end{figure}

A key feature of the Bernstein copula $C^D_B$ is that it is a polynomial in both $u$ (of degree $m$) and $v$ (of degree $n$), which ensures the resulting copula is smooth.
The parameters $m$ and $n$ determine the degree of the polynomial and thus control the trade-off between the smoothness of the approximation and its ability to capture fine details of the underlying dependence structure represented by $D$.
If one considers a sequence of grid copula matrices $D_{m,n}$ associated with $C$ and lets $m \wedge n \to \infty$, the Bernstein copula $C^{D_{m,n}}_B$ converges uniformly to $C$ \cite[Corollary 3.1]{cottin2014bernstein}. Notably, Li et al. \cite{li1998strong} established a very strong form of convergence for these approximations based on partitions of unity, which is stronger than the standard uniform convergence of copulas, as it relates to the convergence of the underlying measures.

\subsection{Shuffle--of--min copulas}
\label{subsec:shuffle_of_min_copulas}

The \emph{shuffle--of--min} construction, introduced by Mikusi\'nski, Sherwood and Taylor in \cite{mikusinski1992shuffles} (see also \cite{Nelsen-2006}), produces a rich family of singular copulas that are dense in~\(\CC_2\) with respect to the supremum norm $\|C\|_\infty = \sup_{(u,v)\in[0,1]^2} |C(u,v)|$.
Fix an integer $n\ge 1$ and partition the unit interval into equal sub–intervals $I_k=[\tfrac{k-1}{n},\tfrac{k}{n}]$ for $k=1,\dots,n$.
Denote by $\mathfrak S_n$ the set of all permutations of $\{1,\dots,n\}$ and let $\pi\in\mathfrak S_n$ be a permutation.

\begin{figure}[htbp!]
    \centering
    \includegraphics[width=0.6\textwidth]{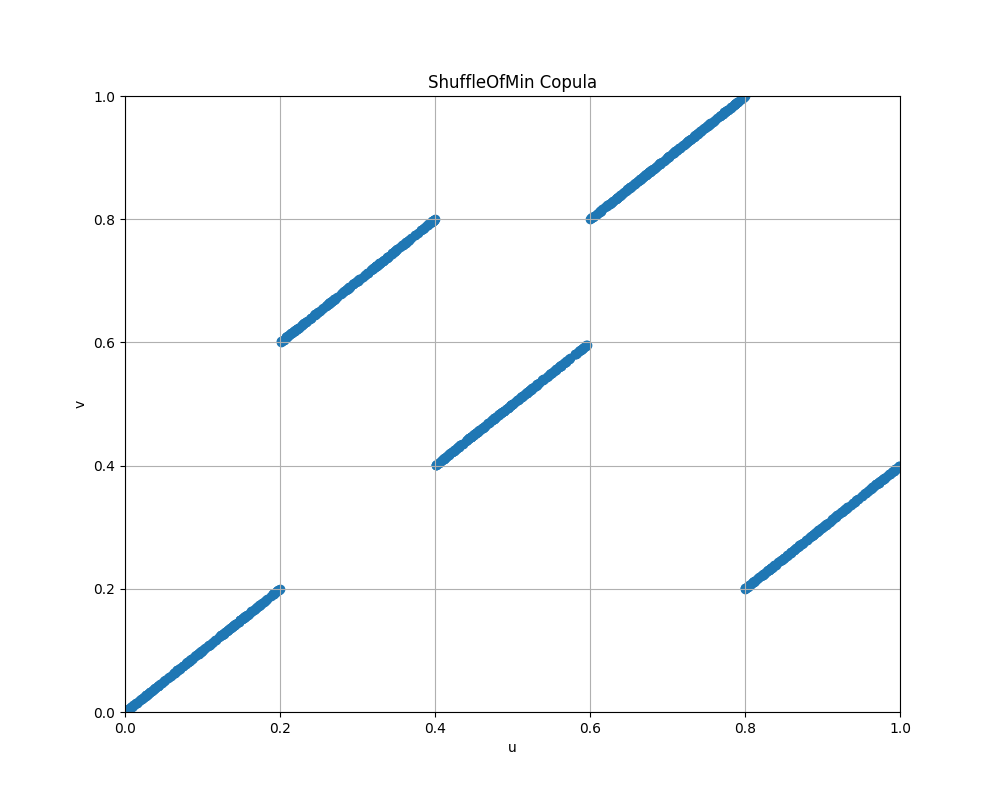}
    \caption{
        Scatter plot of $1000$ samples from the straight shuffle--of--min copula with order $n=5$ and shuffle permutation $\pi=(1,4,3,5,2)$.
    }
    \label{fig:shuffle_of_min_example}
\end{figure}

The \emph{straight shuffle--of--min copula associated with the permutation $\pi$}, denoted $C_\pi$, redistributes the probability mass of the comonotonic copula $M(u,v)=\min\{u,v\}$ equally along the $n$ diagonal line segments
\[
    \left\{(u,v)\in I_k\times I_{\pi(k)} : v=u-\tfrac{k-\pi(k)}{n}\right\},\qquad k=1,\dots,n,
\]
so that each segment carries mass $1/n$.
Equivalently, $C_\pi$ is the distribution of $(U,V)$ where $U\sim\mathrm U(0,1)$ and, conditional on $U\in I_k$, one sets $V=U-\tfrac{k-\pi(k)}{n}$.  We call $n$ the \emph{order} of the shuffle and $\pi$ its \emph{shuffle permutation}.  

More general shuffles allow unequal strip widths $p_1,\dots,p_n>0$ with $\sum p_k=1$ and/or segment reflections, but in this paper we restrict to equal--width \emph{straight} shuffles, because they are already dense in $\CC_2$ and admit closed–form formulas for the concordance measures considered below.
Sample data obtained from a simple shuffle--of--min copula is shown in Figure~\ref{fig:shuffle_of_min_example}.

\subsection{Checkerboard, check--min and check--w copulas}
\label{subsec:delta_and_checkerboard_copulas}

Let $\Delta$ be an $m\times n$-matrix.
We say that $\Delta$ is a \emph{checkerboard matrix} if all entries are nonnegative and for all $1\leq i \leq m$ and $1\leq j \leq n$ it holds that
\[
    \sum_{k=1}^{m} \Delta_{k,j} = \frac1n,\quad \sum_{l=1}^{n} \Delta_{i,l} = \frac1m
.\]
Next, divide the interval \([0,1]\) into $m$ and $n$ equal parts, respectively, and let
\begin{align}\label{eq:checkerboard_rectangles}
    \II_{i, j}
    \coloneq \left[\frac{i-1}{m}, \frac{i}{m}\right) \times \left[\frac{j-1}{n}, \frac{j}{n}\right)
.\end{align}
If $C$ is a copula and
\begin{align}\label{eq:delta_copula}
    \Prob[(X,Y) \in \II_{i,j}] = \Delta_{i,j}
\end{align}
for a random vector $(X,Y)\sim C$, we say that $\Delta$ is a \textit{checkerboard matrix associated with} $C$.
If $C$ has a constant density within each rectangle $\II_{i,j}$, then $C$ is called a \emph{checkerboard copula} and we write $C=C^{\Delta}_{\Pi}$.
The copula is explicitly given by
\begin{align}\label{eq:checkerboard_copula}
    C^{\Delta}_{\Pi}(x,y) = mn\sum_{i=1}^{m} \sum_{j=1}^{n} \Delta_{i,j} \int_{0} ^x\int_0^y \1_{\II_{i,j}}(u,v) \de v \de u
\end{align}
where $\1_{\II_{i,j}}$ is the indicator function of the rectangle $\II_{i,j}$.
$C^{\Delta}_{\Pi}$ is indeed a copula for any checkerboard matrix $\Delta$, see \cite[Section 2]{kuzmenko2020checkerboard} or, in the square case, \cite[Theorem 2.2]{li1998strong}.
Furthermore, as a simple consequence of the density being constant on each $\II_{i,j}$, it holds that
\begin{align}\label{eq:checkerboard_copula_2}
    \Prob\left[(X,Y)\leq (x,y)~\middle|~(X,Y)\in\II_{i,j}\right]
    = \Prob\left[X\leq x~\middle|~(X,Y)\in\II_{i,j}\right] \Prob\left[Y\leq y~\middle|~(X,Y)\in\II_{i,j}\right]
.\end{align}
From \eqref{eq:checkerboard_copula_2}, one obtains the following expression for the copula $C^{\Delta}_{\Pi}$, which is also covered in \cite[Theorem 4.1.3]{Durante-2016}:
\begin{align}\label{frm:check_pi}
    \begin{aligned}
    C^{\Delta}_{\Pi}(u,v)
    = \sum_{k=1}^{i-1}\sum_{l=1}^{j-1} \Delta_{k,l}
      + \sum_{k=1}^{i-1}\Delta_{k,j} (nv-j+1)
    & + \sum_{l=1}^{j-1}\Delta_{i,l}(mu-i+1) \\
    & + \Delta_{i,j} (nv-j+1)(mu-i+1)
    \end{aligned}
\end{align}
for all $(u,v)\in \II_{i,j}$ and $1\leq i \leq m$, $1\leq j \leq n$.
For a given copula $C$, considering associated $n\times n$-checkerboard matrices $\Delta_n$ yields desirable convergence properties $C^{\Delta_n}_{\Pi}\rightarrow C$ as $n\to\infty$, see, e.g., \cite[Corollary 3.2]{li1998strong}.
Next to independence within rectangles as realized through $C^{\Delta}_{\Pi}$, it is also reasonable to consider, for a given $\Delta$, perfect positive dependence within each rectangle, with mass distributed uniformly along the increasing diagonal segment of the rectangle.
More precisely, for every $1\le i\le m$ and $1\le j\le n$ with $\Delta_{i,j}>0$, conditionally on $(X,Y)\in\II_{i,j}$ we require
\begin{align}\label{eq:check_min_copula_2}
    X = \frac{nY - j + i}{m}
\end{align}
almost surely, and additionally
\(
    mX-i+1 \,\mid\, (X,Y)\in\II_{i,j} \sim \Unif(0,1).
\)
If there exist a checkerboard matrix $\Delta$ and a random vector $(X,Y)\sim C$ fulfilling \eqref{eq:delta_copula} and the preceding two conditional requirements, $C$ is called a \emph{check--min} copula, and we write \(C=C^{\Delta}_{\nearrow}\).
Check--min approximations were considered in multiple applications, see, e.g., \cite{Mikusinski-2010,zheng2011approximation,durante2015typical}.
In analogy to \eqref{eq:checkerboard_copula_2}, one can equivalently write \eqref{eq:check_min_copula_2} as
\begin{align}\label{eq:check_min_copula}
    \Prob\left[(X,Y)\leq (x,y)~\middle|~(X,Y) \in \II_{i,j}\right]
    = \Prob\left[ Y \leq \frac{mx - i + j}{n} \wedge y ~\middle|~(X,Y) \in \II_{i,j}\right]
\end{align}
for all $(x,y)\in [0,1]^2$. Using the conditional uniformity of the local coordinate, a case separation shows that
\begin{align}\label{frm:check_min}
    \begin{aligned}
    C^{\Delta}_{\nearrow}(u,v)
    = \sum_{k=1}^{i-1}\sum_{l=1}^{j-1} \Delta_{k,l}
      + \sum_{k=1}^{i-1}\Delta_{k,j} (nv-j+1)
    & + \sum_{l=1}^{j-1}\Delta_{i,l}(mu-i+1) \\
    & + \Delta_{i,j} \min{\{nv-j+1,mu-i+1\}}
    .\end{aligned}
\end{align}
Similar convergence properties as for the checkerboard copula hold for check--min copulas, see \cite{Mikusinski-2010}.
Lastly, consider also the \emph{check--w copula} \(C^{\Delta}_{\searrow}\), which represents perfect negative dependence within each rectangle, with mass distributed uniformly along the decreasing diagonal segment of the rectangle.
More precisely, for every $1\le i\le m$ and $1\le j\le n$ with $\Delta_{i,j}>0$, conditionally on $(X,Y)\in\II_{i,j}$ we require
\begin{align}\label{eq:check_w_copula_2}
    X = \frac{i - 1 + j - nY}{m}
\end{align}
almost surely, and additionally
\[
    mX-i+1 \,\mid\, (X,Y)\in\II_{i,j} \sim \Unif(0,1).
\]
In particular, in analogy to \eqref{eq:check_min_copula}, one can write \eqref{eq:check_w_copula_2} equivalently as
\begin{align*}
    \Prob\left[X\leq x, Y\leq y~\middle|~(X,Y)\in \II_{i,j}\right]
    = \Prob\left[\frac{j-1+i-mx}n \leq Y \leq y~\middle|~ (X,Y)\in \II_{i,j}\right]
\end{align*}
for all $(x,y)\in [0,1]^2$, and, again using the conditional uniformity of the local coordinate, another case separation shows that
\begin{align}\label{frm:check_w}
    \begin{aligned}
    C^{\Delta}_{\searrow}(u,v)
    = \sum_{k=1}^{i-1}\sum_{l=1}^{j-1} \Delta_{k,l}
      + \sum_{k=1}^{i-1}\Delta_{k,j} (nv-j+1)
    & + \sum_{l=1}^{j-1}\Delta_{i,l}(mu-i+1) \\
    & + \Delta_{i,j} \max{\{nv-j + mu-i + 1, 0\}}
    .\end{aligned}
\end{align}

\begin{figure}[htb!]
    \centering
    \includegraphics[width=0.32\textwidth]{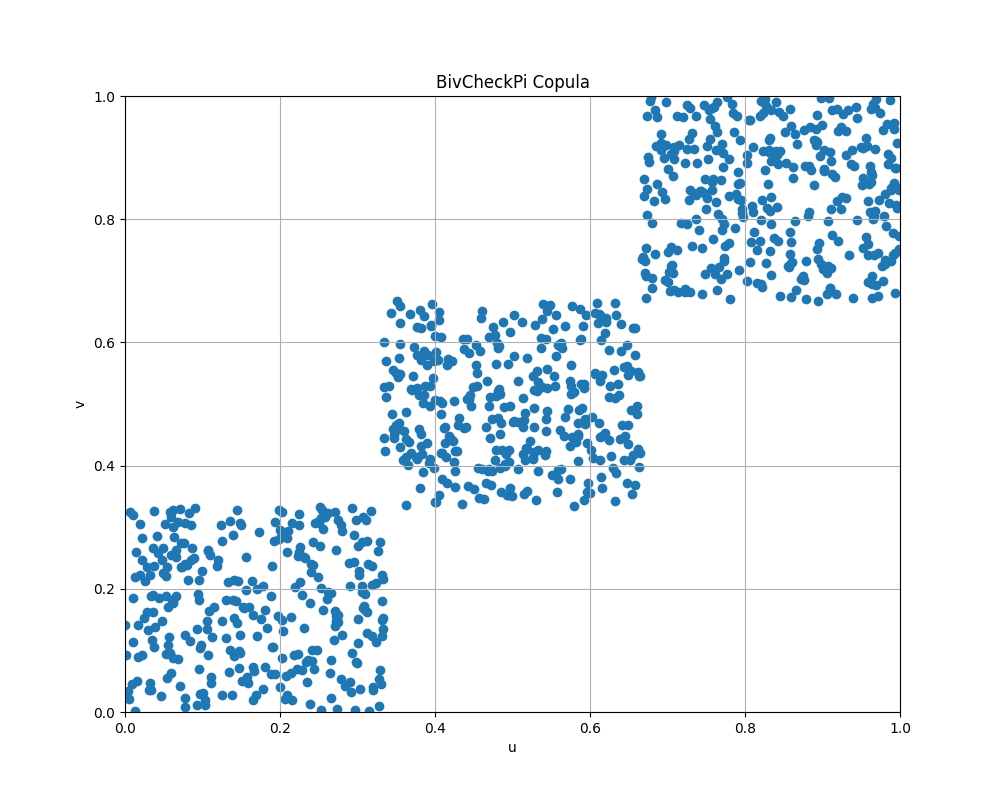}
    \includegraphics[width=0.32\textwidth]{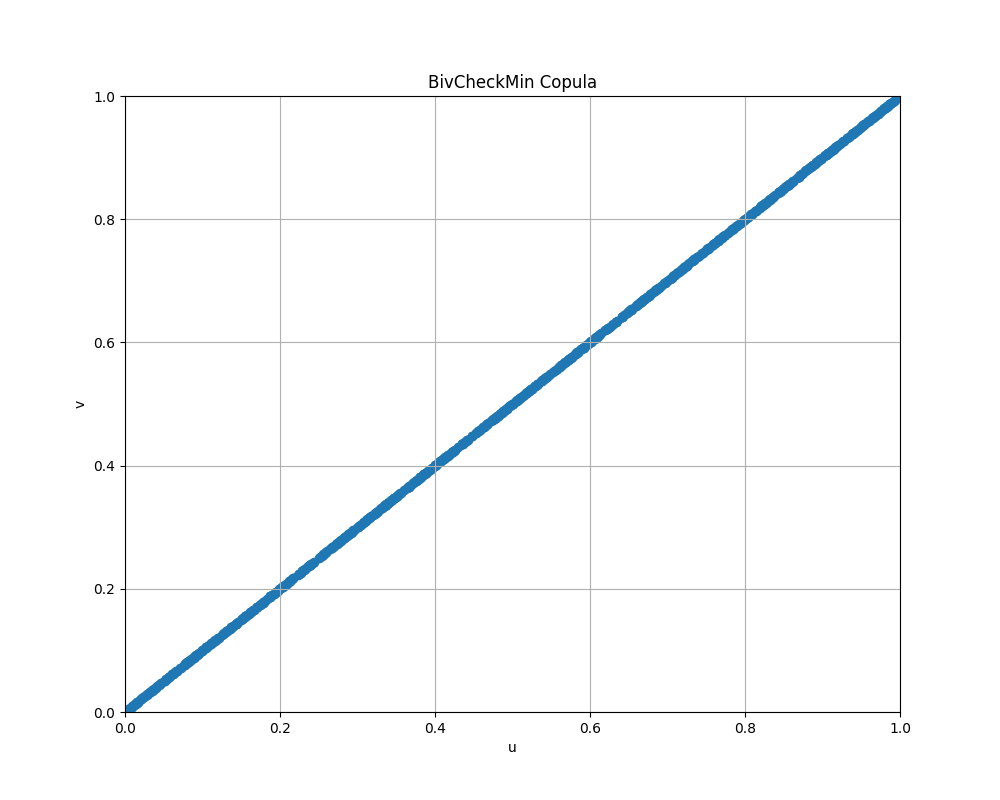}
    \includegraphics[width=0.32\textwidth]{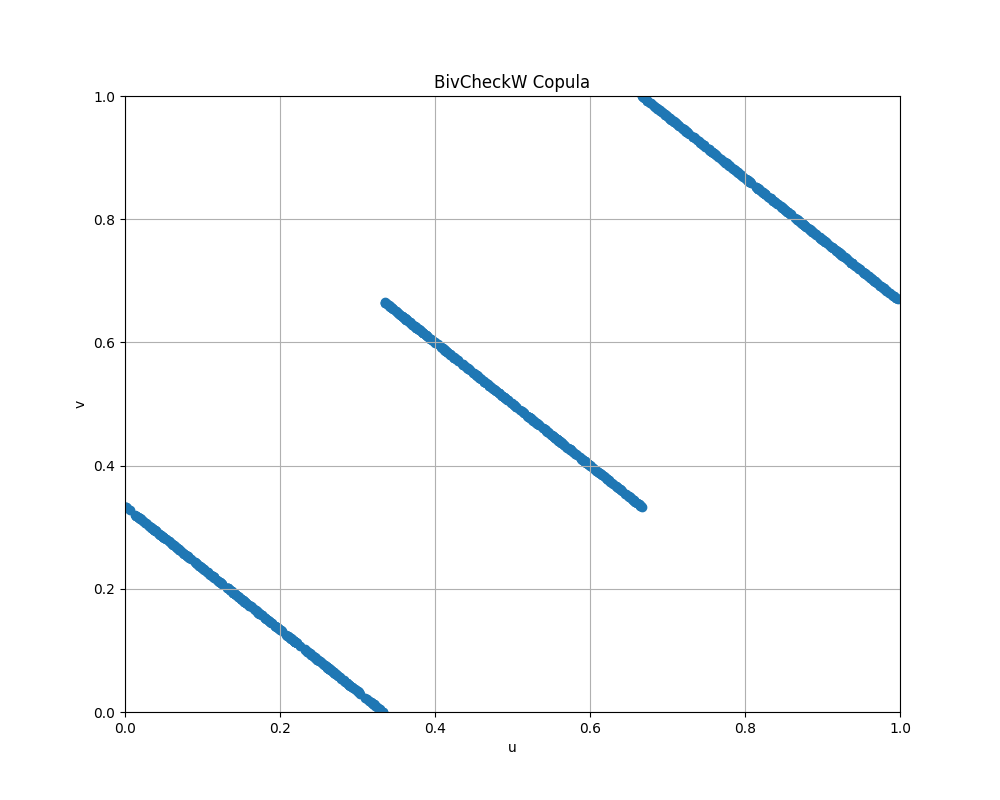} \\
    \includegraphics[width=0.32\textwidth]{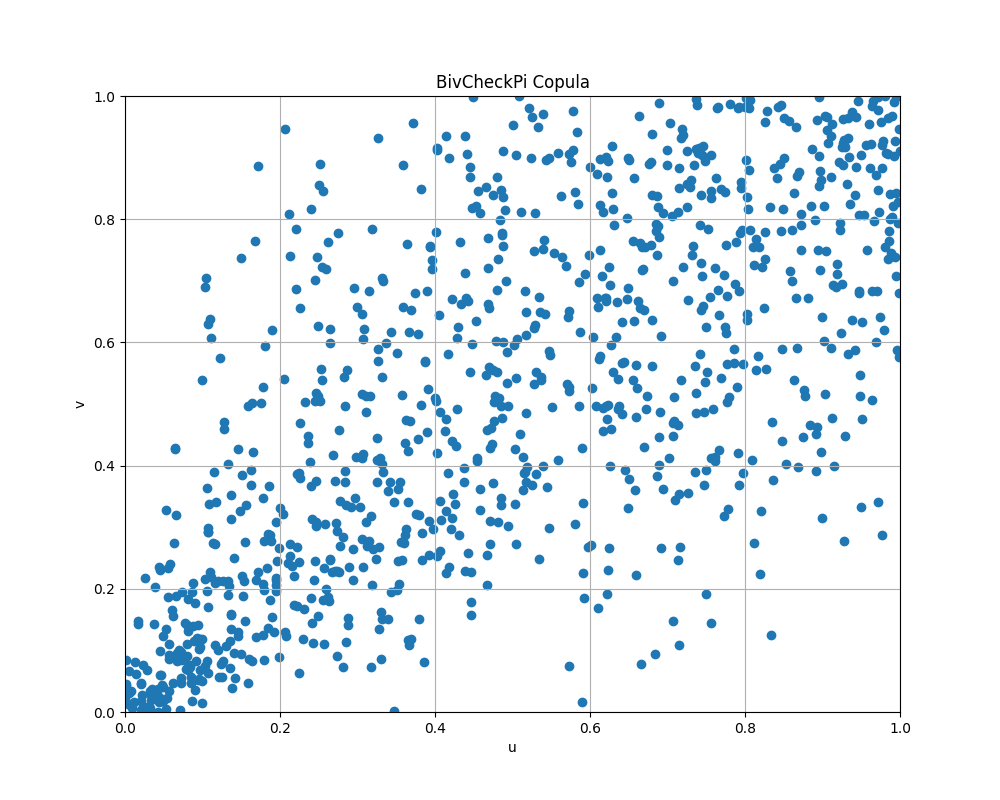}
    \includegraphics[width=0.32\textwidth]{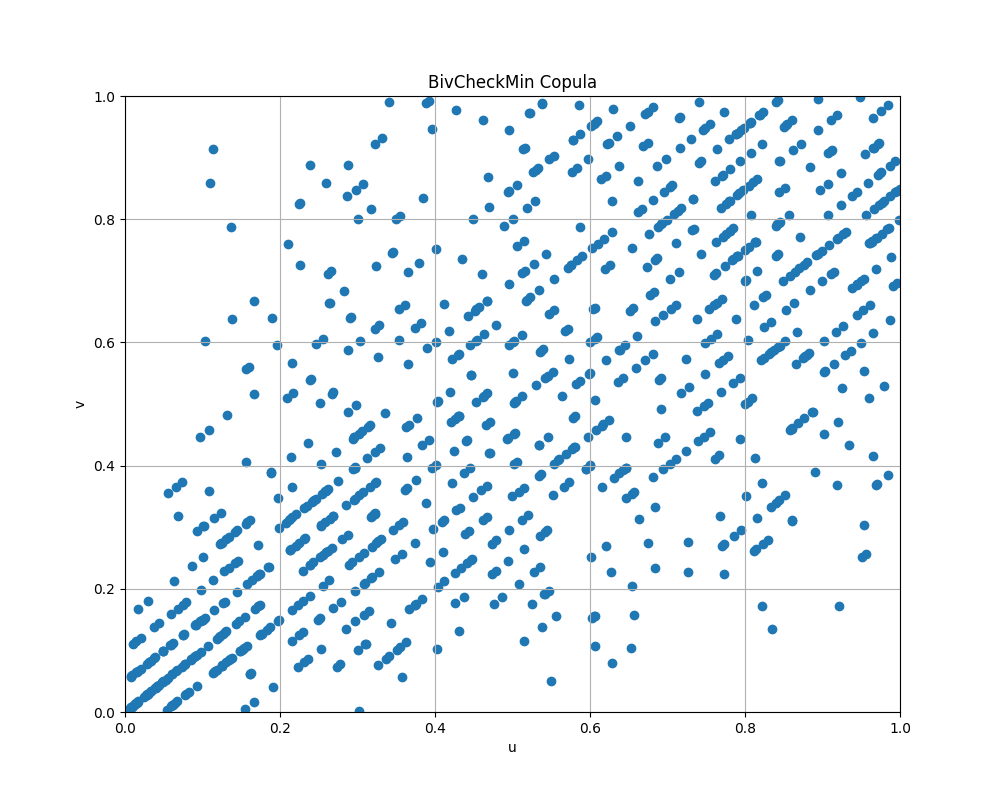}
    \includegraphics[width=0.32\textwidth]{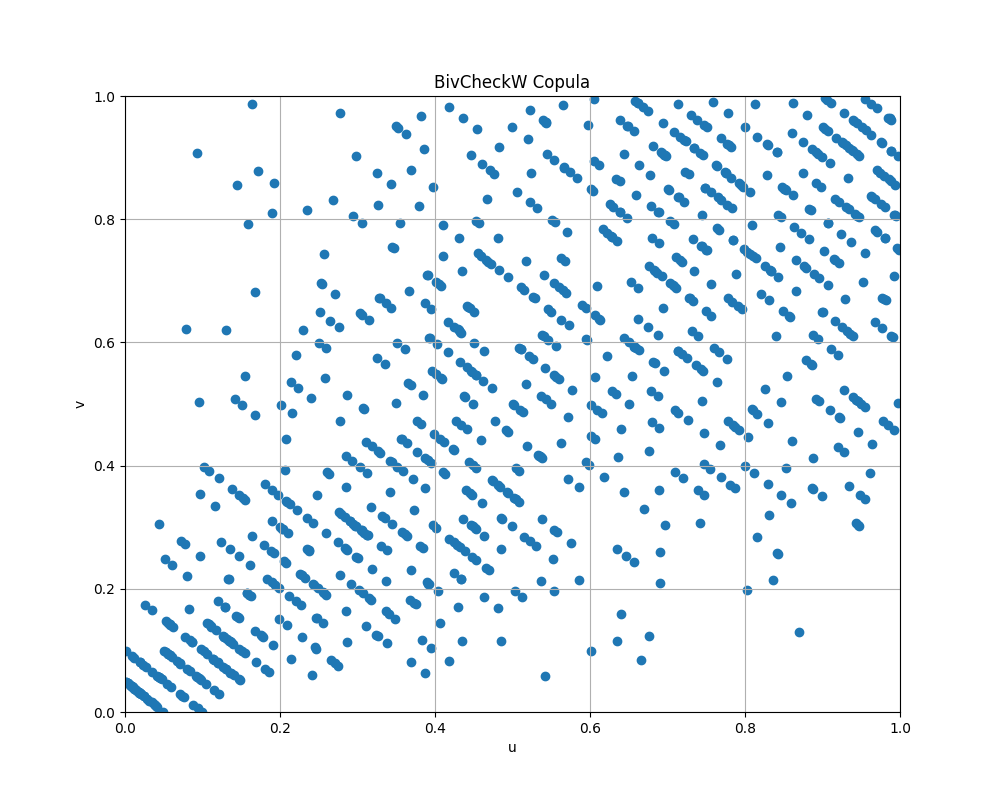}
    \caption{
        Scatter plots of $1,000$ samples generated from checkerboard-type copulas.
        At the top: Checkerboard (left), check--min (middle) and check--w (right) associated with the checkerboard matrix $\frac13 I_3$.
        At the bottom: $20\times 20$-checkerboard approximations of the Clayton copula with parameter $\theta=2$ using the checkerboard copula (left), the check--min copula (middle) and the check--w copula (right).
    }
    \label{fig:checkerboard_types}
\end{figure}

One may also consider a generalization of the check--min and check--w copulas.
For a copula $C$, we say that $C$ is an \emph{$m\times n$-perfect dependence copula} if, for some $(X,Y)\sim C$, the following holds for every $1\le i\le m$ and $1\le j\le n$ with $\Prob[(X,Y)\in\II_{i,j}]>0$:
conditionally on $(X,Y)\in\II_{i,j}$,
\[
    mX-i+1 \sim \Unif(0,1),
\]
and
\begin{align}\label{eq:check_pd_copula}
    Y=\frac{f_{i,j}(mX-i+1) + j - 1}n
\end{align}
almost surely for some Lebesgue measure preserving function $f_{i,j}:[0,1]\rightarrow[0,1]$.
Here, being \emph{Lebesgue measure preserving} means that
\[
    \int_{0}^{1} g(f_{i,j}(x)) \de x = \int_{0}^{1} g(y) \de y
\]
for all bounded, measurable functions $g:[0,1]\rightarrow\mathbb{R}$.
We let $\CC^{\Delta}_{\text{pd}}$ denote the set of all $m\times n$-perfect dependence copulas associated with an $m\times n$-checkerboard matrix $\Delta$.
When choosing $f_{i,j}(x)=x$ or $f_{i,j}(x)=1-x$ for all $1\leq i \le m, 1\le j \le n$, one obtains back the formulas \eqref{eq:check_min_copula_2} and \eqref{eq:check_w_copula_2}, so that check--min and check--w copulas are special cases of perfect dependence copulas.
We broadly refer to copulas that interpolate each cell using a common local dependence structure as \emph{checkerboard-type} copulas.
A visual illustration distinguishing the different checkerboard-type copulas is given in Figure \ref{fig:checkerboard_types}.

\subsection{Measures of association}
\label{subsec:measures_of_association}

Two classical measures of association are Spearman's rho and Kendall's tau, which provide alternatives to the Pearson correlation coefficient that do not depend on the marginal distributions of the random variables.
Both of them can be expressed as an integral over the unit square $[0,1]^2$.
That is, for a bivariate copula $C$, one can write Spearman's rho as
\begin{align}\label{eq:rho_integral}
    \rho_S(C) = 12 \int_{[0,1]^2} C(u,v) \de \lambda^2(u,v) - 3,
\end{align}
and Kendall's tau as
\begin{align}\label{eq:tau_integral}
    \tau(C) = 1 - 4\int_{[0,1]^2} \partial_1 C(u,v) \partial_2 C(u,v) \de \lambda^2(u,v)
,\end{align}
see, e.g., \cite[Definitions 2.4.5 and 2.4.6]{Durante-2016}.
Note that the integrals in \eqref{eq:tau_integral} are always well-defined, since copulas are almost everywhere partially differentiable, see \cite[Thm.~2.2.7]{Nelsen-2006}.
An equivalent (and classical) interpretation of Kendall’s~$\tau$ is in terms
of \emph{concordant} and \emph{discordant} pairs of observations:
if $(U_1,V_1)$ and $(U_2,V_2)$ are two independent draws from the copula~$C$,
then
\begin{align}\label{eq:tau_concordance}
   \tau(C)
   = \Prob\left[(U_1-U_2)(V_1-V_2)>0\right] - \Prob\left[(U_1-U_2)(V_1-V_2)<0\right],
\end{align}
i.e.\ the probability of concordance minus the probability of discordance, see, e.g., \cite[Section 5.1.1]{Nelsen-2006}.
This probabilistic view is particularly handy for copulas supported on
discrete sets such as shuffle--of--min constructions (see
Section~\ref{subsec:shuffle_of_min_measures}). \\
Next to these two measures, which take values in $[-1,1]$, it is also interesting to measure the strength of dependence between two random variables $X$ and $Y$.
Chatterjee's xi is one way to do this, yielding values in $[0,1]$, where the value $0$ is consistent with independence between $X$ and $Y$, and $1$ with perfect dependence, i.e. $Y = f(X)$ for some measurable function $f$, see \cite[Theorem~2.2]{ansarifuchs2022asimpleextension}.
Like Spearman's rho and Kendall's tau, Chatterjee's xi can be expressed as an integral.
For a bivariate copula $C$, it is
\begin{align}\label{eq:xi}
    \xi(C) = 6 \int_{[0,1]^2} \left(\partial_1 C(u,v)\right)^2 \de \lambda^2(u,v) - 2
,\end{align}
compare \cite{dette2013copula} and \cite{chatterjee2020}.
For checkerboard, check--min and check--w copulas, the above integral formulas for Kendall's tau, Spearman's rho and Chatterjee's xi can be evaluated explicitly in terms of the underlying checkerboard matrix. \\
Further classical measures of association for bivariate copulas are the tail dependence coefficients, see, e.g., \cite{Joe-1997,Nelsen-2006}.
For a given bivariate copula $C$, the \emph{lower tail dependence coefficient} is defined by
\begin{align}\label{eq:lower_tail_dependence_coefficient}
    \lambda_L(C)=\lim _{t \rightarrow 0^{+}} \frac{C(t, t)}{t}
,\end{align}
and the \emph{upper tail dependence coefficient} by
\begin{align}\label{eq:upper_tail_dependence_coefficient}
    \lambda_U(C) =2-\lim _{t \rightarrow 1^{-}} \frac{1-C(t, t)}{1-t}
.\end{align}

\section{Explicit measures of association for approximating copulas}
\label{sec:explicit_measures_of_association}

In this section, we formulate the explicit expressions for Spearman's rho, Kendall's tau, Chatterjee's xi and the tail dependence coefficients for $m\times n$-checkerboard matrices associated with Bernstein, checkerboard, check--min and check--w copulas.

\subsection{Explicit measures of association for Bernstein copulas}
\label{subsec:bernstein_copulas_measures_of_association}

Let $\Gamma$ be the $m\times n$-matrix with constant entries
\[
   \Gamma_{i,j} = \frac{1}{(m+1)(n+1)},
   \quad 1\le i\le m, 1\le j\le n.
\]
This matrix will appear in Spearman’s rho for Bernstein copulas.
Let $\Theta^{(m)}$ be the $m\times m$-matrix with entries
\[
   \Theta^{(m)}_{i,j}
   =
   \frac{(i-j)\binom{m}{i}\binom{m}{j}}
        {\left(2m-i-j\right)\binom{2m-1}{i+j-1}},
   \quad
   1\le i,j \le m,
\]
with the convention that $0/0=1$.
Define $\Theta^{(n)}$ analogously (of size $n\times n$).  
These matrices enter into Kendall’s tau.
For Chatterjee’s xi, we introduce two more matrices to handle integrals of Bernstein polynomials and their derivatives.
Let $\Upsilon$ be the $m\times m$-matrix whose $(i,r)$‐entry is
\[
\Upsilon_{i,r} =
\begin{cases}
\displaystyle
\frac{\binom{m}{i}\binom{m}{r}}{(2m-3)\binom{2m-4}{i+r-2}}
\left[ir-\frac{2m(m-1)\binom{i+r}{2}}{(2m-1)(2m-2)}\right],
&\text{if } 1 \le i,r < m,\\[1.5em]
\displaystyle
\frac{m(m-1)(i-m)\binom{m}{i}}{(2m-1)(2m-2)\binom{2m-3}{m+i-2}},
&\text{if }1 \le i < m,r = m,\\[1.5em]
\displaystyle
\frac{m(m-1)(r-m)\binom{m}{r}}{(2m-1)(2m-2)\binom{2m-3}{m+r-2}},
&\text{if }i = m,1 \le r < m,\\[1em]
\frac{m^2}{2m-1},
& \text{if }i = m,r = m.
\end{cases}
\]
and let $\Lambda$ be the $n\times n$-matrix whose $(j,s)$‐entry is
\[
   \Lambda_{j,s}
   :=
   \frac{\binom{n}{j}\binom{n}{s}}{(2n+1)\binom{2n}{j+s}}
.\]
The above matrix definitions give exact formulas for Spearman's rho, Kendall's tau and Chatterjee's xi for arbitrary Bernstein copulas.
These formulas are the content of the following proposition.

\begin{proposition}[Explicit measures of association for Bernstein copulas]
\label{lem:bernstein_measures_of_association}
\ \\
Let $C = C^D_B$ be the Bernstein copula associated with the $m\times n$-grid copula matrix $D$. Then:
\begin{align*}
    \rho_S(C^D_B)
    ~=~& 12\tr\left(\Gamma^\top D\right)-3,
    \\
    \tau(C^D_B)
    ~=~& 1-\tr\left(\Theta^{(m)}D\Theta^{(n)}D^\top\right),
    \\
    \xi(C^D_B)
    ~=~& 6\tr\left(\Upsilon D\Lambda D^\top\right)-2.
\end{align*}
Furthermore, the tail dependence coefficients are given by $\lambda_L(C^D_B) = \lambda_U(C^D_B) = 0$.
\end{proposition}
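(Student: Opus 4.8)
The four formulas share a single mechanism: substitute the bilinear representation \eqref{eq:bernstein_copula} into each integral and exploit the tensor–product structure of the basis $B_{i,m}(u)B_{j,n}(v)$, which makes every integral over $[0,1]^2$ factor into a product of one–dimensional integrals in $u$ and in $v$. Since $C^D_B$ is a polynomial, it is smooth and the integral representations of $\tau$ and $\xi$ apply without measure–theoretic caveats. The plan rests on two elementary one–dimensional facts. First, the product rule $B_{a,p}(u)B_{b,q}(u) = \frac{\binom{p}{a}\binom{q}{b}}{\binom{p+q}{a+b}} B_{a+b,\,p+q}(u)$ together with $\int_0^1 B_{c,r}(u)\,\de u = \frac{1}{r+1}$ yields the closed form $\int_0^1 B_{a,p}B_{b,q}\,\de u = \frac{\binom{p}{a}\binom{q}{b}}{(p+q+1)\binom{p+q}{a+b}}$. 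Second, the derivative identity $B_{i,m}' = m\bigl(B_{i-1,m-1}-B_{i,m-1}\bigr)$ reduces every integral involving a derivative to a finite sum of such products. With these two facts all four statements become purely combinatorial identities.

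For Spearman's rho this is immediate: $\int_0^1 B_{i,m}=\frac1{m+1}$ and $\int_0^1 B_{j,n}=\frac1{n+1}$ give $\rho_S=\frac{12}{(m+1)(n+1)}\sum_{i,j}D_{i,j}-3$, which is exactly $12\tr(\Gamma D)-3$. For Kendall's tau I would write $\int_{[0,1]^2}\partial_1 C\,\partial_2 C = \sum_{i,j,k,l} D_{i,j}D_{k,l}\,P^{(m)}_{i,k}\,Q^{(n)}_{j,l}$ with $P^{(m)}_{i,k}=\int_0^1 B_{i,m}'B_{k,m}$ and $Q^{(n)}_{j,l}=\int_0^1 B_{j,n}B_{l,n}'$. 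Integration by parts shows $P^{(m)}$ is antisymmetric away from the corner, with $P^{(m)}_{m,m}=\tfrac12$ because only $B_{m,m}$ survives on the boundary; expanding $B_{i,m}'$ via the derivative identity and simplifying the resulting binomials is exactly where the work lies, and it yields $\Theta^{(m)}=2P^{(m)}$ (the $0/0=1$ convention reproducing the corner value $1$). Since $Q^{(n)}=(P^{(n)})^{\top}$, collecting the sum into matrix form as $\tr\bigl((P^{(m)})^{\top}DQ^{(n)}D^{\top}\bigr)$ and using cyclicity and transpose–invariance of the trace turns $\int\partial_1 C\,\partial_2 C$ into $\tfrac14\tr(\Theta^{(m)}D\Theta^{(n)}D^{\top})$, hence the stated $\tau$.

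Chatterjee's xi follows the same template applied to $\int_{[0,1]^2}(\partial_1 C)^2 = \sum_{i,j,r,s} D_{i,j}D_{r,s}\,\Omega_{i,r}\,\Lambda_{j,s}$, where $\Omega_{i,r}=\int_0^1 B_{i,m}'B_{r,m}'$ and $\Lambda_{j,s}=\int_0^1 B_{j,n}B_{s,n}$. The factor $\Lambda$ is read off directly from the product–integral formula. For $\Omega$ I would expand both derivatives, producing four product–integrals; the piecewise definition of $\Omega$ is forced by the fact that $B_{m,m-1}\equiv 0$, so when an index equals $m$ only the term $B_{m-1,m-1}$ remains (the corner case giving $\int_0^1(m u^{m-1})^2\,\de u=\frac{m^2}{2m-1}$). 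Since $\Omega$ is symmetric, assembling the double sum as $\tr(\Omega^{\top}D\Lambda D^{\top})$ gives $\xi=6\tr(\Omega D\Lambda D^{\top})-2$.

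For the tail coefficients I would analyze $C^D_B(t,t)=\sum_{i,j}D_{i,j}B_{i,m}(t)B_{j,n}(t)$ near the corners. As $t\to0^+$ every admissible term has $i,j\ge1$, so $B_{i,m}(t)B_{j,n}(t)=O(t^{2})$ and $C(t,t)/t\to0$, giving $\lambda_L=0$. As $t\to1^-$, writing $t=1-\varepsilon$, only the indices $i\in\{m-1,m\}$, $j\in\{n-1,n\}$ contribute to first order; using the marginal constraints $D_{m,n}=1$, $D_{m,n-1}=\frac{n-1}{n}$, $D_{m-1,n}=\frac{m-1}{m}$ one finds $\lim_{t\to1^-}\frac{1-C(t,t)}{1-t}=2$, whence $\lambda_U=0$. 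The main obstacle throughout is the combinatorial simplification behind $\Theta^{(m)}=2P^{(m)}$ and the closed form of $\Omega$: these amount to Vandermonde–type binomial identities, and the only delicate points are the boundary/corner cases created by vanishing Bernstein polynomials and the $0/0=1$ convention.
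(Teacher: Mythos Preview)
Your proposal is correct and shares the paper's overall strategy: exploit the tensor--product form of \eqref{eq:bernstein_copula} to factor every integral over $[0,1]^2$ into a product of one--dimensional Bernstein integrals, then assemble the resulting sums as traces. The paper actually defers $\rho_S$ and $\tau$ to \cite{durrleman2000copulas} and cites a bounded--density argument for the tail coefficients, so your explicit treatment of $\tau$ (via $P^{(m)}_{i,k}=\int_0^1 B_{i,m}'B_{k,m}$, the almost--antisymmetry from integration by parts, and the trace identity $\tr\bigl((P^{(m)})^{\top}D(P^{(n)})^{\top}D^{\top}\bigr)=\tr\bigl(P^{(m)}DP^{(n)}D^{\top}\bigr)$) and your direct corner expansion for $\lambda_L,\lambda_U$ in fact go beyond what is written out there. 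The only methodological difference concerns how the $u$--integrals for $\xi$ are evaluated: you invoke the recursion $B_{i,m}'=m(B_{i-1,m-1}-B_{i,m-1})$ together with the product rule $B_{a,p}B_{b,q}=\tfrac{\binom{p}{a}\binom{q}{b}}{\binom{p+q}{a+b}}B_{a+b,p+q}$, whereas the paper differentiates directly to $\partial_u B_{i,m}(u)=\binom{m}{i}(i-mu)\,u^{i-1}(1-u)^{m-i-1}$ for $i<m$, expands $(i-mu)(r-mu)$, and reduces everything to Beta integrals $\int_0^1 u^p(1-u)^q\,\de u=p!\,q!/(p+q+1)!$. Your route keeps $\tau$ and $\xi$ in a common framework and makes the piecewise definition of $\Omega$ emerge from $B_{m,m-1}\equiv 0$; the paper's route delivers the stated closed forms of $\Omega$ and $\Lambda$ with somewhat less binomial simplification.
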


Here $\tr(\Gamma^\top D)=\sum_{i=1}^m\sum_{j=1}^n \Gamma_{i,j}D_{i,j}$ is the Frobenius inner product of $\Gamma$ and $D$.
In the case of $m=n$, the above formula for Spearman's rho and Kendall's tau can be found in \cite[Theorem 9 and 10]{durrleman2000copulas} and the rectangular case is a direct extension.
The derivation of the formula for Chatterjee's xi in Proposition \ref{lem:bernstein_measures_of_association} is given in the appendix from page \pageref{proof:bernstein_measures_of_association} onwards.

\subsection{Explicit measures of association for shuffle--of--min copulas}
\label{subsec:shuffle_of_min_measures}

Let $C_\pi$ be the order-$n$ straight shuffle--of--min copula determined by a permutation $\pi\in\mathfrak{S}_n$ (equal strip width $1/n$ and no reversals). Denote
\[
    N_{\mathrm{inv}}(\pi)=\#\{(i,j):i<j,\pi(i)>\pi(j)\},
\qquad
    d_i=\pi(i)-i.
\]
The measures of association introduced in Section~\ref{subsec:measures_of_association} admit closed algebraic forms for $C_{\pi}$ that depend only on these permutation statistics.

\begin{proposition}[Explicit measures of association for a straight shuffle--of--min copula]\label{prop:shuffle_measures}~\\
For the equal--width, straight shuffle--of--min copula $C_\pi$ of order $n$, we have
\[
    \tau(C_\pi) = 1-\frac{4N_{\mathrm{inv}}(\pi)}{n^2},
    \quad
    \rho_S(C_\pi) = 1-\frac{6\sum_{i=1}^n d_i^2}{n^3},
    \quad
    \xi(C_\pi) = 1.
\]
Furthermore, $\lambda_L(C_\pi) = 1_{\{\pi(1)=1\}}$ and $\lambda_U(C_\pi) = 1_{\{\pi(n)=n\}}$.
\end{proposition}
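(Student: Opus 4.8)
The plan is to work throughout with the explicit stochastic description of $C_\pi$ as the law of a pair $(U,V)$ where $U\sim\Unif(0,1)$ and, on the event $\{U\in I_k\}$, one has $V=U+d_k/n$ with $d_k=\pi(k)-k$; this is just the segment constraint $v=u-(k-\pi(k))/n$ rewritten, and it makes each statistic accessible through the representation of Section~\ref{subsec:measures_of_association} best suited to it. The four assertions are then handled by four essentially independent arguments.

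For Kendall's tau I would use the concordance representation \eqref{eq:tau_concordance}. Taking two independent copies $(U_1,V_1),(U_2,V_2)$ and conditioning on the independent, uniform segment indices $K,L\in\{1,\dots,n\}$, the event $\{K=L\}$ has probability $1/n$ and is almost surely concordant, since on one segment $V-V'=U-U'$ and hence $(U_1-U_2)(V_1-V_2)=(U_1-U_2)^2>0$. On $\{K\ne L\}$ the intervals $I_K,I_L$ are disjoint, so $\operatorname{sign}(U_1-U_2)=\operatorname{sign}(K-L)$ and $\operatorname{sign}(V_1-V_2)=\operatorname{sign}(\pi(K)-\pi(L))$ almost surely; thus such a pair is discordant exactly when $(K,L)$ forms an inversion of $\pi$. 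Weighting the $2N_{\mathrm{inv}}(\pi)$ ordered discordant pairs and the complementary concordant ones by $1/n^2$ and adding the diagonal contribution $1/n$ gives $\tau(C_\pi)=1-4N_{\mathrm{inv}}(\pi)/n^2$ after a short simplification.

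For Spearman's rho I would start from $\rho_S(C_\pi)=12\,\mathbb{E}[UV]-3$, which follows from the integral definition via $\int_{[0,1]^2}C\,\de\lambda^2=\mathbb{E}[(1-U)(1-V)]=\mathbb{E}[UV]$. Conditioning on $\{U\in I_k\}$, with $U$ uniform on $[(k-1)/n,k/n]$ and $V=U+d_k/n$, I compute $\mathbb{E}[UV\mid U\in I_k]$ from $\mathbb{E}[U\mid U\in I_k]=(2k-1)/(2n)$ and $\mathbb{E}[U^2\mid U\in I_k]=(3k^2-3k+1)/(3n^2)$, then average with weights $1/n$. Two simplifications drive the result: the telescoping identity $\sum_k(3k^2-3k+1)=n^3$ produces the constant term $1/3$, and the cross term collapses through $\sum_k d_k(2k-1)=-\sum_k d_k^2$, which itself follows from $\sum_k(\pi(k)-k)(\pi(k)+k-1)=0$ since $\pi$ preserves both $\sum_k k$ and $\sum_k k^2$. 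This yields $\mathbb{E}[UV]=\tfrac13-\sum_i d_i^2/(2n^3)$ and hence the stated formula.

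Chatterjee's xi is immediate from its characterization: since $V=g(U)$ almost surely for the piecewise-affine measurable map $g(u)=u+d_k/n$ on $I_k$, the pair exhibits perfect dependence, so $\xi(C_\pi)=1$ by \cite[Theorem~2.1]{ansarifuchs2022asimpleextension}; equivalently $\partial_1 C_\pi(u,v)=\1_{\{g(u)\le v\}}$ almost everywhere, whose square integrates to $1-\mathbb{E}[V]=\tfrac12$, giving $\xi=6\cdot\tfrac12-2=1$. For the tail coefficients I would argue by locality near the corners: for $t<1/n$ only segment $1$ can meet $[0,t]^2$, and it does so iff $d_1=0$, i.e. $\pi(1)=1$, whence $C_\pi(t,t)/t\to\1_{\{\pi(1)=1\}}$ via \eqref{eq:lower_tail_dependence_coefficient}; symmetrically, expanding $1-C_\pi(t,t)=\Prob[U>t]+\Prob[V>t]-\Prob[U>t,V>t]$ near $(1,1)$ and noting that $\{U>t\}$ forces segment $n$, the last term equals $1-t$ iff $\pi(n)=n$ and $0$ otherwise, giving $\lambda_U(C_\pi)=\1_{\{\pi(n)=n\}}$ via \eqref{eq:upper_tail_dependence_coefficient}. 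The main obstacle is the Spearman computation, where recognizing the telescoping sum and the permutation cancellation is what reduces a messy double sum to the clean statistic $\sum_i d_i^2$; the remaining arguments are routine counting and case analysis.
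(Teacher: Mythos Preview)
Your proposal is correct and follows essentially the same approach as the paper: Kendall's tau via the concordance representation \eqref{eq:tau_concordance} conditioned on segment indices, Spearman's rho via $12\,\mathbb{E}[UV]-3$ together with the permutation identity $\sum_k d_k(2k-1)=-\sum_k d_k^2$ (which the paper isolates as a separate lemma), Chatterjee's xi from the perfect-dependence characterization, and the tail coefficients by local analysis near the corners. Your derivation of the permutation identity through the factorization $(\pi(k)-k)(\pi(k)+k-1)$ and the telescoping $\sum_k(3k^2-3k+1)=n^3$ are minor presentational variants of exactly the same computation.
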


A derivation of the formulas in Proposition \ref{prop:shuffle_measures} is given in the appendix from page \pageref{proof:shuffle_measures} onwards.
Note that similar formulas for Spearman's rho and Kendall's tau have been observed in \cite[Lemma 1]{schreyer2017exact} and in the recent \cite[Lemma 3.2]{tschimpke2025revisiting}, with the latter covering the Kendall's tau formula given in the Proposition above in the case of symmetric permutations.
Furthermore, note that the identity for Chatterjee's xi is a direct consequence of the fact that shuffle--of--min copulas are perfect dependence copulas (compare, e.g., \cite[Example 1.1]{ansari2025continuity}), and the tail dependence coefficients are elementary.

\subsection{Explicit measures of association for checkerboard-type copulas}
\label{subsec:checkerboard_measures}
To give concise expressions, we make use of the following matrices:
First, let 
\[
   \Delta =\left(\Delta_{i,j}\right)_{1\le i\le m,1\le j\le n}
\]
be an $m\times n$-checkerboard matrix and denote by $\Delta^\top$ its transpose.
Next, define the $m\times n$-matrix $\Omega$ by
\[
    \Omega_{i,j} := \frac{(2m-2i+1)(2n-2j+1)}{mn}
\]
for $1\le i\leq m,1\le j\leq n$.
Also, let $\Xi^{(m)}$ be the $m\times m$-matrix with entries
\[
   \Xi^{(m)}_{i,j}
   =
   \begin{cases}
     2, & \text{if }i>j,\\
     1, & \text{if }i=j,\\
     0, & \text{if }i<j,
   \end{cases}
   \qquad
   (1\le i,j \le m),
\]
and let $\Xi^{(n)}$ be the analogous $n\times n$-matrix.
Lastly, let $T$ be the strict upper-triangular $n\times n$-matrix
\[
   T_{i,j}
   =\begin{cases}
         1, & \text{if }i<j,\\
         0, & \text{otherwise,}
        \end{cases}
   \qquad (1 \le i,j \leq n)
\]
and let $M_{\xi}$ be the $n\times n$-matrix given by
\[
    M_{\xi} = TT^\top + T^\top + \tfrac{1}{3}I_n
.\]

\begin{proposition}[Explicit measures of association for checkerboard-type copulas]
    \label{lem:checkerboard_measures_of_association}~\\
    Let $C_{\Pi}$, $C_{\nearrow}$ and $C_{\searrow}$ be bivariate checkerboard, check--min and check--w copulas associated with an $m\times n$-checkerboard matrix $\Delta$.
    Then, the measures of association can be expressed as follows:
    \begin{enumerate}[label=(\roman*)]
        \item \label{itm:rho_checkerboard} Spearman's rho:
        \begin{align*}
            \rho_S(C_{\Pi}) &= 3\tr\left(\Omega^\top\Delta\right)-3, \\
            \rho_S(C_{\nearrow}) &= \rho_S(C_{\Pi}) + \frac1{mn}, \\
            \rho_S(C_{\searrow}) &= \rho_S(C_{\Pi}) - \frac1{mn}.
        \end{align*}
        \item \label{itm:tau_checkerboard} Kendall's tau:
        \begin{align*}
            \tau(C_{\Pi}) &= 1
            -
            \tr\left(\Xi^{(m)}\Delta\Xi^{(n)}\Delta^\top\right) \\
            \tau(C_{\nearrow}) &= \tau(C_{\Pi}) + \tr(\Delta^\top \Delta), \\
            \tau(C_{\searrow}) &= \tau(C_{\Pi}) - \tr(\Delta^\top \Delta).
        \end{align*}
        \item \label{itm:xi_checkerboard} Chatterjee's xi:
        \begin{align*}
            \xi(C_{\Pi}) &= \frac{6m}{n}\tr\left(\Delta^\top \Delta M_{\xi}\right) -2, \\
            \xi(C_{\text{pd}}) &= \xi(C_{\Pi}) + \frac{m\tr(\Delta^\top \Delta)}n
        \end{align*}
        for all $C_{\text{pd}}\in\CC^{\Delta}_{\text{pd}}$ and in particular for $C_{\nearrow}$ and $C_{\searrow}$.
    \end{enumerate}
    Furthermore, $\lambda_L(C_{\Pi}) = \lambda_U(C_{\Pi}) = \lambda_L(C_{\searrow}) = \lambda_U(C_{\searrow}) = 0$, $\lambda_L(C_{\nearrow}) = \Delta_{1, 1}(m\wedge n)$ and $\lambda_U(C_{\nearrow}) = \Delta_{m, n}(m\wedge n)$.
\end{proposition}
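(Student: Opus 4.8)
The plan is to handle all three association measures through the integral representations in Section~\ref{subsec:measures_of_association}, reducing each integral over $[0,1]^2$ to a sum of integrals over the cells $\II_{i,j}$. On each cell I substitute the normalized coordinates $s=mu-i+1$ and $t=nv-j+1$, so that $\de\lambda^2=\tfrac{1}{mn}\,\de s\,\de t$ and the copula formulas \eqref{frm:check_pi}, \eqref{frm:check_min} and \eqref{frm:check_w} become explicit functions of $(s,t)\in[0,1]^2$ built from the partial sums $A_{ij}=\sum_{k<i,\,l<j}\Delta_{k,l}$, $B_{ij}=\sum_{k<i}\Delta_{k,j}$ and $D_{ij}=\sum_{l<j}\Delta_{i,l}$. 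The structural observation I would exploit throughout is that the three copulas share their first three terms and differ only in the final within-cell term, so each difference $\rho_S(C_\nearrow)-\rho_S(C_\Pi)$, $\tau(C_\nearrow)-\tau(C_\Pi)$, and so on, localizes to a single elementary integral on $[0,1]^2$ weighted by $\Delta_{i,j}$ or $\Delta_{i,j}^2$ and summed over cells.

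For Spearman's rho I would use the identity $\int_{[0,1]^2}C\,\de\lambda^2=\mathbb{E}[(1-U)(1-V)]$ for $(U,V)\sim C$ and condition on the cell. Since all three copulas have uniform $U$- and $V$-marginals within each cell, the product-of-means contribution $\mathbb{E}[1-U\mid\II_{i,j}]\,\mathbb{E}[1-V\mid\II_{i,j}]=\tfrac{(2m-2i+1)(2n-2j+1)}{4mn}$ is identical for all three and, after weighting by $\Delta_{i,j}$ and summing the multiplicities $(m-i)$, $(n-j)$ of the partial sums, assembles into $3\tr(\Omega^\top\Delta)-3$. The residual within-cell covariance is $0$, $+\tfrac{1}{12mn}$ and $-\tfrac{1}{12mn}$ for $C_\Pi$, $C_\nearrow$ and $C_\searrow$ respectively (equivalently $\int_0^1\!\int_0^1[\min\{s,t\}-st]\,\de s\,\de t=\tfrac1{12}$), producing the stated $\pm\tfrac{1}{mn}$ corrections after multiplying by $12$.

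For Kendall's tau I would compute the piecewise partial derivatives: on $\II_{i,j}$ one has $\partial_1 C_\Pi=m(D_{ij}+\Delta_{i,j}t)$ and $\partial_2 C_\Pi=n(B_{ij}+\Delta_{i,j}s)$, integrate the product over $(s,t)$, and collect the resulting double sum of partial-sum products into $\tr(\Xi^{(m)}\Delta\Xi^{(n)}\Delta^\top)$, where the entries $2,1,0$ of $\Xi^{(m)}$ encode the factor $2$ on ordered pairs $k<i$ and the factor $1$ on the term $\tfrac{\Delta_{i,j}}2$. For Chatterjee's xi I would square $\partial_1 C_\Pi=m(D_{ij}+\Delta_{i,j}t)$ and use $\int_0^1(D+\Delta t)^2\,\de t=D^2+D\Delta+\tfrac13\Delta^2$; writing $D=\Delta T$ turns the three resulting sums into $\tr(\Delta TT^\top\Delta^\top)$, $\tr(\Delta T\Delta^\top)$ and $\tfrac13\tr(\Delta\Delta^\top)$, which combine into $\tfrac{6m}{n}\tr(\Delta^\top\Delta M_\xi)-2$ after a cyclic rearrangement and after using that $\Delta^\top\Delta$ is symmetric to trade $T$ for $T^\top$ in the middle term. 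In both cases the check--min and check--w corrections again localize to the within-cell term and evaluate to $\pm\tr(\Delta^\top\Delta)$.

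The step I expect to be the main obstacle is the perfect-dependence correction for $\xi$. For $C_{\mathrm{pd}}\in\CC^{\Delta}_{\mathrm{pd}}$ I must first identify the conditional law $\partial_1 C_{\mathrm{pd}}(u,v)=\Prob[V\le v\mid U=u]$: given $U=u$ in row $i$, the column $j$ is itself random with conditional probability $m\Delta_{i,j}$, and inside the chosen cell $V$ is the deterministic value with $t=f_{i,j}(s)$. A case split in $v$ then yields, on $\II_{i,j}$, $\partial_1 C_{\mathrm{pd}}=mD_{ij}+m\Delta_{i,j}\1_{\{f_{i,j}(s)\le t\}}$. Squaring and integrating, the only coefficient that changes relative to the checkerboard is that of $\Delta_{i,j}^2$, and the measure-preserving hypothesis forces $\int_0^1\!\int_0^1\1_{\{f_{i,j}(s)\le t\}}\,\de s\,\de t=\int_0^1 t\,\de t=\tfrac12$ independently of the particular $f_{i,j}$; this raises the coefficient from $\tfrac13$ to $\tfrac12$ and gives the single universal correction $\tfrac{m}{n}\tr(\Delta^\top\Delta)$ for every perfect-dependence copula, in particular for $C_\nearrow$ and $C_\searrow$. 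Finally, the tail-dependence coefficients follow by evaluating the copula formulas in the corner cells $\II_{1,1}$ and $\II_{m,n}$: for $C_\Pi$ the bounded within-cell density forces quadratic decay and hence $\lambda_L=\lambda_U=0$, the comonotone term $\min\{\cdot,\cdot\}$ in $C_\nearrow$ produces the linear rates $\Delta_{1,1}(m\wedge n)$ and $\Delta_{m,n}(m\wedge n)$, and the countermonotone $\max\{\cdot,\cdot\}$ in $C_\searrow$ vanishes near both corners.
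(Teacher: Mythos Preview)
Your approach matches the paper's essentially line for line: cell-by-cell integration after the substitution $s=mu-i+1$, $t=nv-j+1$, the same partial-derivative formulas for $\partial_1 C$ and $\partial_2 C$, the same trace assemblies via $\Xi^{(m)},\Xi^{(n)}$ and $M_\xi$, and the same measure-preserving argument to get the universal $\tfrac{m}{n}\tr(\Delta^\top\Delta)$ correction for every $C_{\mathrm{pd}}$.

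The one step that does not go through as written is your justification of $\lambda_U(C_\searrow)=0$. The within-cell term $\Delta_{m,n}\max\{s+t-1,0\}$ does \emph{not} vanish near the upper corner: in $\II_{m,n}$ one has $s,t\to 1$ and hence $\max\{s+t-1,0\}\to 1$, so the quoted reason is literally false there. The paper avoids this by using the pointwise inequality $C_\searrow\le C_\Pi$, which gives $0\le\lambda_U(C_\searrow)\le\lambda_U(C_\Pi)=0$ immediately; alternatively a direct expansion of $1-C_\searrow(t,t)$ in the corner cell shows the ratio tends to $2$. Either fix is short, but the argument as you stated it needs one of them.
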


In the case of $n\times n$-checkerboard copulas the above formula for Spearman's rho and Kendall's tau can be found in \cite[Theorem 15 and 16]{durrleman2000copulas} (see also \cite[Theorem 1 and 2]{sukeda2023minimum} and \cite[Formula (2)]{kuzmenko2020checkerboard}).
We are not aware of references for the other cases.

\begin{corollary}[Upper bound for $\xi(C_{\text{pd}})-\xi(C_{\Pi})$]\label{cor:xi_checkerboard}~\\
    Let $\Delta$ be an $m\times n$-checkerboard matrix and let $C^{\Delta}_{\text{pd}}\in \CC^{\Delta}_{\text{pd}}$.
    Then, it holds that
    \[
        \left|\xi(C^{\Delta}_{\text{pd}}) - \xi(C^{\Delta}_{\Pi})\right|
        \le
        \begin{cases} 
            \frac{m}{n^2}, & \text{if } m\le n \\
            \frac{1}{n}, & \text{if } m > n
        \end{cases}
    .\]
\end{corollary}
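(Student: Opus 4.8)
The plan is to reduce the corollary immediately to a bound on $\tr(\Delta^\top\Delta)$ using the explicit formula already established. By Proposition~\ref{lem:checkerboard_measures_of_association}\ref{itm:xi_checkerboard}, for every $C^{\Delta}_{\text{pd}}\in\CC^{\Delta}_{\text{pd}}$ we have the exact identity
\[
    \xi(C^{\Delta}_{\text{pd}}) - \xi(C^{\Delta}_{\Pi}) = \frac{m}{n}\tr(\Delta^\top\Delta),
\]
and since $\tr(\Delta^\top\Delta)=\sum_{i,j}\Delta_{i,j}^2\ge 0$, the absolute value on the left equals $\tfrac{m}{n}\tr(\Delta^\top\Delta)$. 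Thus the corollary is equivalent to the single estimate $\tr(\Delta^\top\Delta)\le \tfrac{1}{m\vee n}$, from which both cases of the claimed bound follow by multiplying by $m/n$: when $m\le n$ one gets $\tfrac{m}{n}\cdot\tfrac1n=\tfrac{m}{n^2}$, and when $m>n$ one gets $\tfrac{m}{n}\cdot\tfrac1m=\tfrac1n$.

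To prove $\tr(\Delta^\top\Delta)\le\tfrac1{m\vee n}$, I would first establish a uniform entrywise bound. Because $\Delta$ is a checkerboard matrix, every entry is nonnegative, so for any fixed $i$ the row-sum constraint $\sum_{l}\Delta_{i,l}=\tfrac1m$ forces $\Delta_{i,j}\le\tfrac1m$, and likewise the column-sum constraint $\sum_{k}\Delta_{k,j}=\tfrac1n$ forces $\Delta_{i,j}\le\tfrac1n$. Hence $\max_{i,j}\Delta_{i,j}\le\min\{\tfrac1m,\tfrac1n\}=\tfrac1{m\vee n}$. The second ingredient is that the total mass is one: summing the row-sum condition over all $m$ rows gives $\sum_{i,j}\Delta_{i,j}=m\cdot\tfrac1m=1$.

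Combining these two facts yields the trace bound directly, via the elementary estimate
\[
    \tr(\Delta^\top\Delta)=\sum_{i,j}\Delta_{i,j}^2
    \le\Big(\max_{i,j}\Delta_{i,j}\Big)\sum_{i,j}\Delta_{i,j}
    \le\frac{1}{m\vee n}\cdot 1,
\]
which is exactly what was required. There is no genuine obstacle here: the entire argument rests on the nonnegativity and the marginal sum conditions of the checkerboard matrix, and the only point demanding care is the bookkeeping of the case distinction $m\le n$ versus $m>n$, which merely records the two possible values of $m\vee n$. I would remark that the bound is attained (and hence sharp) by the uniform matrix $\Delta_{i,j}=\tfrac1{mn}$ when $m=n$, though for the stated inequality no sharpness discussion is needed.
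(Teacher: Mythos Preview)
Your proof is correct and follows the same route as the paper: both invoke Proposition~\ref{lem:checkerboard_measures_of_association}\ref{itm:xi_checkerboard} to reduce the claim to $\tr(\Delta^\top\Delta)\le \tfrac{1}{m\vee n}$, and then establish this via an elementary inequality on nonnegative matrices with prescribed marginals (the paper uses $\sum_j\Delta_{i,j}^2\le(\sum_j\Delta_{i,j})^2$ row-wise, you use $\sum\Delta_{i,j}^2\le(\max\Delta_{i,j})\sum\Delta_{i,j}$, which is an equally short variant).

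One minor correction to your closing aside: the uniform matrix $\Delta_{i,j}=\tfrac{1}{mn}$ does \emph{not} attain the bound (it gives $\tr(\Delta^\top\Delta)=\tfrac{1}{mn}$, not $\tfrac{1}{m\vee n}$); for $m=n$ the bound is instead saturated by permutation-type matrices $\Delta_{i,\pi(i)}=\tfrac1n$. As you note, this has no bearing on the proof itself.
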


The proofs for Proposition \ref{lem:checkerboard_measures_of_association} and Corollary \ref{cor:xi_checkerboard} are given in the appendix from page \pageref{proof:checkerboard_measures_of_association} onwards.

\section{Checkerboard estimates for Chatterjee's xi}
\label{sec:checkerboard_bounds}

In this section, we first discuss in Section \ref{subsec:xi_lower_bound} how the checkerboard and check--min formulas relate to general Chatterjee's xi values, and then analyse their performance as estimates for Chatterjee's xi from sampled data in Section \ref{subsec:xi_estimates}.

\subsection{Checkerboard bound for Chatterjee's xi}
\label{subsec:xi_lower_bound}

The expressions of Proposition \ref{lem:checkerboard_measures_of_association} can be used to calculate the measures of association for a given checkerboard copula in a straightforward and efficient way, without the need for numerical integration or estimates from sampled data.
The next theorem shows that the checkerboard formula in Proposition \ref{lem:checkerboard_measures_of_association} \ref{itm:xi_checkerboard} also serves as lower bound of Chatterjee's xi for every totally positive bivariate copula $C$, and hence shows the formula \eqref{eq:xi_bound}.
A non-negative function \(f\) on \([0,1]^2\) is called TP\(_2\), if
\begin{align}\label{eq:tp2}
  f(u_1,v_1)f(u_2,v_2)
  \le
  f(u_1\wedge u_2,v_1\wedge v_2)
  f(u_1\vee u_2,v_1\vee v_2)
\end{align}
for all \(u_1,u_2,v_1,v_2\in[0,1]\), or for almost all such points if \(f\)
is only defined up to null sets.
Here, $\wedge$ and $\vee$ denote the minimum and maximum, respectively.
Moreover, an absolutely continuous copula \(C\) with density \(c\) is called MTP\(_2\) if its density \(c\) is TP\(_2\), i.e.,
\begin{align}\label{eq:mtp2}
  c\left(u_1,v_1\right)c\left(u_2,v_2\right)
  \le
  c\left(u_1\wedge u_2,v_1\wedge v_2\right)
  c\left(u_1\vee u_2,v_1\vee v_2\right)
\end{align}
for almost all \(u_1,u_2,v_1,v_2\in[0,1]\).

We shall use the following classical inequality due to Karamata, which can be found, e.g., in \cite[Prop.~4.B.1, pp.~156--157]{marshall2011inequalities}.

\begin{lemma}[Karamata's inequality]\label{lem:karamata}
    Let \(a,b\in\mathbb R^m\) be decreasing vectors, i.e.
    \(a_1\ge\cdots\ge a_m\) and \(b_1\ge\cdots\ge b_m\). Suppose that
    \(b\) is majorized by \(a\), that is,
    \[
        \sum_{i=1}^k b_i
        \le
        \sum_{i=1}^k a_i,
        \qquad k=1,\ldots,m-1,
        \qquad
        \sum_{i=1}^m b_i
        =
        \sum_{i=1}^m a_i.
    \]
    Then, for every convex function \(\varphi\colon\mathbb R\to\mathbb R\),
    \(
        \sum_{i=1}^m \varphi(b_i)
        \le
        \sum_{i=1}^m \varphi(a_i).
    \)
\end{lemma}

\begin{theorem}[Checkerboard bound for $\xi$]\label{thm:xi_bounds}
Let \(C\) be an absolutely continuous copula with MTP$_2$ density \(c\).
Let \(\Delta\) be the \(m\times n\)-checkerboard matrix associated with \(C\), and let \(C^\Delta_\Pi\) be the corresponding checkerboard copula.
Then
\[
    \xi(C^\Delta_\Pi)\le \xi(C).
\]
\end{theorem}

\begin{proof}
    If \(m=1\), then the row-sum constraints force \(\Delta_{1,j}=1/n\) for all \(j=1,\ldots,n\), and hence \(C_\Pi^\Delta=\Pi\).
    Thus \(\xi(C_\Pi^\Delta)=0\le \xi(C)\).
    We may therefore assume \(m\ge2\).
    Write
    \(
        h(u,v):=\partial_1 C(u,v)
    \)
    and define for \(i=1,\dots,m\) the row-averaged conditional distribution function
    \begin{align}\label{eq:H_i}
        H_i(v)
        :=
        m\int_{(i-1)/m}^{i/m} h(u,v)\,du
        =
        m\left[
            C\left(\frac{i}{m},v\right)
            -
            C\left(\frac{i-1}{m},v\right)
        \right].
    \end{align}
    By Jensen's inequality, for each \(v\in[0,1]\),
    \[
        \int_0^1 h(u,v)^2\,du
        =
        \sum_{i=1}^m
        \int_{\frac{i-1}m}^{\frac{i}m} h(u,v)^2\,du
        \ge
        \frac1m\sum_{i=1}^m H_i(v)^2.
    \]
    Integrating over \(v\), we obtain
    \begin{align}\label{eq:strip_average_inequality}
        \int_{[0,1]^2} h(u,v)^2\,du\,dv
        \ge
        \frac1m\int_0^1\sum_{i=1}^m H_i(v)^2\,dv.
    \end{align}

    It remains to compare the latter term with the checkerboard copula \(C^\Delta_\Pi\).
    Let \(\mathcal L_n\) denote the piecewise linear interpolation operator on the grid \(\{0,1/n,\ldots,1\}\).
    That is, for a function \(f\colon[0,1]\to\mathbb R\), \(\mathcal L_n f\) is the continuous function satisfying for \(v\in[(j-1)/n,j/n]\),
    \[
        (\mathcal L_n f)(v)
        =
        (j-nv) f\left(\frac{j-1}{n}\right)
        +
        (nv-j+1) f\left(\frac{j}{n}\right),
        \qquad j=1,\ldots,n.
    \]
    By the definition of the checkerboard copula, or equivalently by formula \eqref{frm:check_pi}, one has, for a.e. \(u\in[(i-1)/m,i/m]\),
    \[
        \partial_1 C^\Delta_\Pi(u,v)
        =
        (\mathcal L_n H_i)(v).
    \]
    Indeed, if \(v\in[(j-1)/n,j/n]\), then
    \(
        (\mathcal L_n H_i)(v)
        =
        m\left(
            \sum_{\ell=1}^{j-1}\Delta_{i,\ell}
            +
            \Delta_{i,j}(nv-j+1)
        \right),
    \)
    which is exactly the derivative of \(C^\Delta_\Pi\) with respect to its first coordinate.
    We now show that
    \begin{align}\label{eq:l_n_inequality}
        \frac1m\int_0^1
            \sum_{i=1}^m
            \left((\mathcal L_nH_i)(v)\right)^2\de v
        \le
        \frac1m\int_0^1
            \sum_{i=1}^m H_i(v)^2\de v.
    \end{align}
    For this purpose, set
    \begin{align}\label{eq:a_i}
        a_i(v):=H_i(v)-v,
        \qquad i=1,\dots,m.
    \end{align}
    Then \(\sum_{i=1}^m a_i(v)=0\).
    Since \(c\) is TP$_2$, and total positivity is preserved under integration over intervals, the row-averaged densities
    \[
        H_i'(v)
        =
        m\int_{(i-1)/m}^{i/m} c(u,v)\,du
    \]
    form a nonnegative TP$_2$ function in \((i,v)\).
    Indeed, for \(i<k\) and \(v_1<v_2\),
    \[
    H_i'(v_1)H_k'(v_2)-H_i'(v_2)H_k'(v_1)
    =
    m^2
    \int_{\frac{i-1}m}^{\frac{i}m}
    \int_{\frac{k-1}m}^{\frac{k}m}
    \left[
        c(u,v_1)c(w,v_2)
        -
        c(u,v_2)c(w,v_1)
    \right]\de w\de u
    \ge 0,
    \]
    because \(u<w\) on the domain of integration and \(c\) is TP\(_2\).
    Thus the row-averaged densities form a nonnegative, TP\(_2\) discrete-continuous function in \((i,v)\).
    In particular, the conditional law of \(V\) given \(U\in[(i-1)/m,i/m]\) is stochastically increasing in \(i\).
    Equivalently,
    \(
        H_1(v)\ge H_2(v)\ge \cdots \ge H_m(v)
    \)
    for all \(v\), and therefore by \eqref{eq:a_i}
    \[
        a_1(v)\ge a_2(v)\ge \cdots \ge a_m(v).
    \]
    For \(k=1,\dots,m-1\), define the partial sums
    \(
        A_k(v):=\sum_{i=1}^k a_i(v)
        =
        \sum_{i=1}^k H_i(v)-kv.
    \)
    We claim that each \(A_k\) is concave. Indeed, for a.e. \(v\),
    \[
        A_k'(v)
        =
        m\int_0^{k/m} c(u,v)\,du-k.
    \]
    Since the second marginal is uniform, \(u\mapsto c(u,v)\) is the conditional density of \(U\) given \(V=v\), and therefore
    \[
        A_k'(v)
        =
        m\,\mathbb P\left(U\le \frac{k}{m}\mid V=v\right)-k.
    \]
    By the TP$_2$ property, $C$ is in particular stochastically increasing, see, e.g., \cite[p.~146]{Mueller-Stoyan-2002}.
    Thus
    \[
        v\mapsto \mathbb P\left(U\le \frac{k}{m}\mid V=v\right)
    \]
    is decreasing, and consequently \(A_k'\) is decreasing.
    Hence \(A_k\) is concave.
    Moreover, \(A_k(0)=A_k(1)=0\), so \(A_k\ge0\).

    Now put
    \(
        b_i(v):=(\mathcal L_nH_i)(v)-v
        =
        \mathcal L_n a_i(v),
    \)
    where we use \(\mathcal L_n v=v\).
    Then \(\sum_i b_i(v)=0\), and the partial sums of \(b(v)\) satisfy
    \[
        B_k(v):=\sum_{i=1}^k b_i(v)
        =
        \mathcal L_n A_k(v).
    \]
    Since \(A_k\) is concave, its piecewise linear interpolation lies below it:
    \begin{align}\label{eq:l_n_inequality_2}
        B_k(v)=\mathcal L_n A_k(v)\le A_k(v),
        \qquad k=1,\dots,m-1.
    \end{align}
    Furthermore, the ordering of the components is preserved, since
    \[
        b_i(v)-b_{i+1}(v)
        =
        \mathcal L_n(H_i-H_{i+1})(v)\ge0.
    \]
    Thus, for every fixed \(v\), the vector \(b(v)\) is majorized by \(a(v)\).
    Indeed, both vectors are decreasing and have total sum zero.
    Moreover, for \(k=1,\ldots,m-1\), one has from \eqref{eq:l_n_inequality_2} that
    \[
        \sum_{i=1}^k b_i(v)
        =
        \mathcal L_n A_k(v)
        \le
        A_k(v)
        =
        \sum_{i=1}^k a_i(v).
    \]
    By Lemma~\ref{lem:karamata}, applied to the convex function \(x\mapsto x^2\), we obtain
    \[
        \sum_{i=1}^m b_i(v)^2
        \le
        \sum_{i=1}^m a_i(v)^2.
    \]
    Using \(\sum_i a_i(v)=\sum_i b_i(v)=0\) and \eqref{eq:a_i}, this gives
    \[
        \sum_{i=1}^m \left((\mathcal L_nH_i)(v)\right)^2
        =
        mv^2+\sum_{i=1}^m b_i(v)^2
        \le
        mv^2+\sum_{i=1}^m a_i(v)^2
        =
        \sum_{i=1}^m H_i(v)^2.
    \]
    Therefore, \eqref{eq:l_n_inequality} follows.
    Combining \eqref{eq:l_n_inequality} with \eqref{eq:strip_average_inequality}, we obtain
    \[
    \begin{aligned}
        \int_{[0,1]^2}
            \left(\partial_1 C^\Delta_\Pi(u,v)\right)^2
            \,du\,dv
        &=
        \frac1m\int_0^1
            \sum_{i=1}^m\left((\mathcal L_nH_i)(v)\right)^2\,dv \\
        &\le
        \frac1m\int_0^1
            \sum_{i=1}^mH_i(v)^2\,dv \\
        &\le
        \int_{[0,1]^2}
            \left(\partial_1 C(u,v)\right)^2
            \,du\,dv.
    \end{aligned}
    \]
    Hence, by the integral representation of Chatterjee's \(\xi\) in \eqref{eq:xi},
    \[
        \xi(C^\Delta_\Pi)\le \xi(C),
    \]
    which proves the assertion.
\end{proof}

Figure~\ref{fig:tp2_checkerboard_lower_bound_visualization} illustrates the two smoothing steps in the proof at the density level.

\begin{figure}[htbp]
    \centering
    \includegraphics[width=\linewidth]{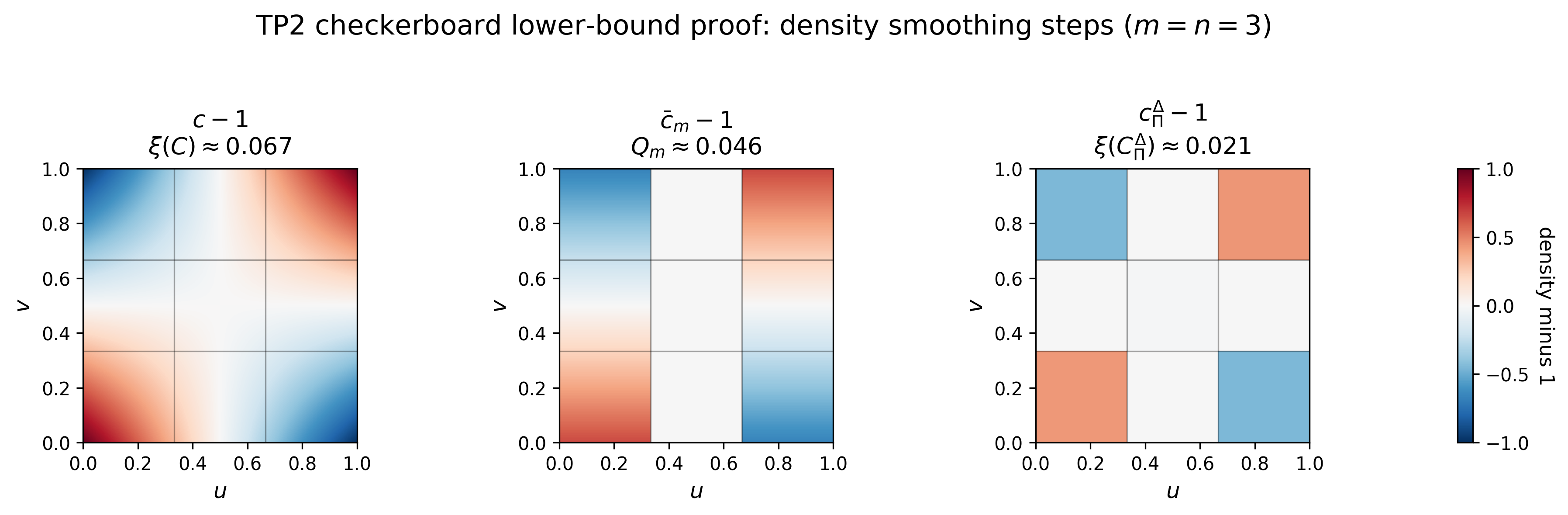}
    \caption{
        Density-level illustration of the two smoothing steps in the proof of
        Theorem~\ref{thm:xi_bounds}, displayed as deviations from independence.
        The left panel shows \(c-1\), the middle panel shows
        \(\bar c_m-1\), obtained by averaging the density over vertical strips,
        and the right panel shows \(c^\Delta_\Pi-1\), obtained by averaging over
        the full checkerboard cells.
        The displayed values correspond to the quantities
        \(6\int(\partial_1 C)^2-2\),
        \(6m^{-1}\int_0^1\sum_{i=1}^m H_i(v)^2\,dv-2\), and
        \(\xi(C^\Delta_\Pi)\), respectively, illustrating the inequalities in
        the proof.
    }
    \label{fig:tp2_checkerboard_lower_bound_visualization}
\end{figure}

The MTP$_2$ assumption in Theorem~\ref{thm:xi_bounds} is essential.
Already when weakening the positive-dependence assumption to stochastic
increasingness, the lower bound no longer holds, as demonstrated by the
following example.

\begin{example}[Lower bound SI counterexample]
\label{ex:xi_checkerboard_not_lower_bound}
Let
\[
    \Delta_{2,4}
    = \frac18
    \begin{pmatrix}
        1 & 2 & 0       & 1 \\
        1 & 0       & 2 & 1
    \end{pmatrix}
\]
and let \(C=C^{\Delta_{2,4}}_{\Pi}\) be the associated \(2\times4\)-checkerboard copula.
The row-conditional distribution functions are ordered, so the associated
checkerboard copula is stochastically increasing.
If one aggregates the four \(y\)-intervals pairwise, the associated coarser \(2\times2\)-checkerboard matrix is
\[
    \Delta_2
    =\frac18
    \begin{pmatrix}
        3 & 1 \\
        1 & 3
    \end{pmatrix}.
\]
Using Proposition~\ref{lem:checkerboard_measures_of_association}
\ref{itm:xi_checkerboard}, we obtain
\[
    \xi(C)
    =
    \xi\left(C^{\Delta_{2,4}}_{\Pi}\right)
    =
    \frac{1}{16},
    \qquad
    \xi\left(C^{\Delta_2}_{\Pi}\right)
    =
    \frac18.
\]
In particular,
\(
    \xi(C)
    <
    \xi\left(C^{\Delta_2}_{\Pi}\right),
\)
so replacing a copula by its coarser checkerboard approximation can increase Chatterjee's \(\xi\).
In particular, the quantity \(\xi(C^\Delta_\Pi)\) is not a lower bound for \(\xi(C)\) in general.
\end{example}

$\xi(C)\leq\xi(C^{\Delta}_{\nearrow})$ is also false in general, as already indicated above.
A simple counterexample is given by the check--min copula $C=C^{\Delta}_{\nearrow}$ associated with the checkerboard matrix
\[
    \Delta = \frac{1}{4}\begin{pmatrix}
        1 & 0 & 0 & 0 \\
        0 & 0 & 1 & 0 \\
        0 & 1 & 0 & 0 \\
        0 & 0 & 0 & 1
    \end{pmatrix}
,\]
which satisfies perfect dependence and hence $\xi(C)=1$, but this is not the case when transitioning to the associated $2\times 2$-checkerboard matrix.
Whilst the lower bound holds under the MTP$_2$ assumption, Example \ref{ex:xi_bounds_counterexample_1} shows that even under these positive-dependence constraints on the copula $C$, check--min copulas may yield lower values for $\xi$ than the copula itself.

\begin{example}[Upper bound MTP$_2$ counterexample]
    \label{ex:xi_bounds_counterexample_1}
    Consider the matrices
    \[
        \Delta_4 \coloneq \frac{1}{4}\begin{pmatrix}
            1 & 0 & 0 & 0 \\
            0 & 0.5 & 0.5 & 0 \\
            0 & 0.5 & 0.5 & 0 \\
            0 & 0 & 0 & 1
        \end{pmatrix},
        \quad \Delta_2 = \frac{1}{8}\begin{pmatrix}
            3 & 1 \\
            1 & 3
        \end{pmatrix}
    ,\]
    and let $C$ be the checkerboard copula associated with $\Delta_4$, i.e. $C = C^{\Delta_4}_{\Pi}$.
    This copula has a totally positive density, which implies multiple other classical dependence concepts, see \cite[Figure 1]{fuchs2023total}.
    Furthermore, using Proposition \ref{lem:checkerboard_measures_of_association} \ref{itm:xi_checkerboard}, it is
    \[
        \xi(C)
        = \xi\left(C^{\Delta_4}_{\Pi}\right)
        = \frac58
        > \frac7{16}
        = \xi\left(C^{\Delta_2}_{\nearrow}\right)
    .\]
\end{example}

\subsection{Checkerboard estimator for Chatterjee's xi}
\label{subsec:xi_estimates}

Let now $(X_1, Y_1), (X_2, Y_2), \ldots$ be a random sample from $(X,Y)$ and assume that $(X,Y)$ has a continuous distribution function.
Chatterjee's xi admits a strongly consistent and asymptotically normal estimator given by
\begin{align}\label{estTn}
    \xi_n(Y|X) = \frac{\sum_{k=1}^n (n \min\{R_k,R_{N(k)}\}-L_k^2)}{\sum_{k=1}^n L_k (n-L_k)},
\end{align}
where $R_k$ denotes the rank of $Y_k$ among $Y_1,\ldots,Y_n$, i.e., the number of $j$ such that $Y_j\leq Y_k$, and $L_k$ denotes the number of $j$ such that $Y_j\geq Y_k$.
For each $k$, the number $N(k)$ denotes the index $j$ such that $X_j$ is the nearest neighbour of $X_k$, where ties are broken uniformly at random.
All these appealing properties allow a fast, model-free variable selection method, noting that $\xi_n$ can be computed in $O(n\log n)$ time.
For more information about the variable selection capabilities of Chatterjee's xi, see \cite{chatterjee2021,huang2022kernel,ansari2024empirical}.
The checkerboard copulas considered above provide an alternative way to estimate Chatterjee's xi.
Let \(0<\kappa\le 1\), set \(K_n=\lfloor n^\kappa\rfloor\), and let \(R_k^X\) and \(R_k^Y\) denote the ranks of \(X_k\) and \(Y_k\), respectively.
For \(r=1,\ldots,n\), put
\(
    J_r^{(n)}=\left[\frac{r-1}{n},\frac{r}{n}\right),
\)
and for \(i=1,\ldots,K_n\), put
\(
    I_i^{(K_n)}=\left[\frac{i-1}{K_n},\frac{i}{K_n}\right).
\)
We define the empirical \(K_n\times K_n\)-checkerboard matrix \(\widehat\Delta_{n,K_n}\) by aggregating the rank-based empirical copula on the fine \(n\times n\) grid onto the coarser \(K_n\times K_n\) grid:
\[
    \left(\widehat\Delta_{n,K_n}\right)_{i,j}
    =
    n\sum_{k=1}^n
    \lambda\!\left(I_i^{(K_n)}\cap J_{R_k^X}^{(n)}\right)
    \lambda\!\left(I_j^{(K_n)}\cap J_{R_k^Y}^{(n)}\right),
    \qquad 1\le i,j\le K_n,
\]
where \(\lambda\) denotes the one-dimensional Lebesgue measure.
This fractional rank-binning construction ensures that \(\widehat\Delta_{n,K_n}\) is a checkerboard matrix, i.e.,
\[
    \sum_{j=1}^{K_n}\left(\widehat\Delta_{n,K_n}\right)_{i,j}
    =
    \frac1{K_n},
    \qquad
    \sum_{i=1}^{K_n}\left(\widehat\Delta_{n,K_n}\right)_{i,j}
    =
    \frac1{K_n}.
\]
For notational brevity below, write
\[
    \Delta_{\lfloor n^\kappa\rfloor}:=\widehat\Delta_{n,K_n}.
\]
Let \(M_{\xi,K_n}=T_{K_n}T_{K_n}^\top+T_{K_n}^\top+\tfrac13 I_{K_n}\), where \(T_{K_n}\) is the strict upper-triangular \(K_n\times K_n\)-matrix.
Then set
\begin{align}\label{eq:xi_estimator}
    \xi^{\kappa}_n(\bsX^{(n)}, \bsY^{(n)})
      &=
      6 \tr\left(
          \widehat\Delta_{n,K_n}^\top
          \widehat\Delta_{n,K_n}
          M_{\xi,K_n}
      \right)
      + \tfrac12 \tr\left(
          \widehat\Delta_{n,K_n}^\top
          \widehat\Delta_{n,K_n}
      \right)
      - 2 .
\end{align}
This is the arithmetic average of the checkerboard and local-perfect-dependence formulas for Chatterjee's xi in Proposition \ref{lem:checkerboard_measures_of_association} \ref{itm:xi_checkerboard}.
Since both averaged quantities are values of Chatterjee's xi for copulas, \(\xi_n^\kappa\) takes values in \([0,1]\).
This is in contrast to the \eqref{estTn} estimator, which in general can also take negative values, cf.~\cite[Rem.~3]{chatterjee2021}.
Choosing a checkerboard matrix of size \(K_n\times K_n\) with \(\kappa<1\) avoids overfitting by ensuring that both \(K_n\to\infty\) and the average number of observations per grid cell grows as \(n\to\infty\).
In \cite{Ansari-Fuchs-2023}, using checkerboard copulas for estimation has already been done for a whole set of dependence measures that in particular covers Chatterjee's xi, though with a more implicit formula for the estimator.

\begin{theorem}[Convergence of $\xi^{\kappa}_n$]\label{thm:xi_estimator}~\\
    If \(0<\kappa\leq 1/3\), then the estimator $\xi^{\kappa}_n$ can be computed in time $\OO(n\log n)$ and converges to $\xi(C)$ almost surely as $n\to\infty$, where $C$ is the copula associated with $(X,Y)$.
\end{theorem}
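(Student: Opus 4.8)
The plan is to recognise the estimator as the empirical average of the two closed forms in Proposition~\ref{lem:checkerboard_measures_of_association}\ref{itm:xi_checkerboard}. Writing $N=\lfloor n^{\kappa}\rfloor$ and $g(\Delta)=6\tr(\Delta^{\top}\Delta M_{\xi})+\tfrac12\tr(\Delta^{\top}\Delta)-2$, the definition \eqref{eq:xi_estimator} reads $\xi^{\kappa}_n=g(\Delta_N)$, and since $m=n=N$ here we have $g(\Delta)=\tfrac12\bigl(\xi(C^{\Delta}_{\Pi})+\xi(C^{\Delta}_{\mathrm{pd}})\bigr)$ for any checkerboard matrix $\Delta$. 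Let $\Delta^{*}_N$ be the population matrix $(\Delta^{*}_N)_{i,j}=\Prob[(X,Y)\in\II_{i,j}]$ (working with pseudo-observations, so that the binned counts $\Delta_N$ estimate $\Delta^{*}_N$ from a copula sample). I would then write
\[
\xi^{\kappa}_n-\xi(C)=\bigl(g(\Delta_N)-\mathbb{E}\,g(\Delta_N)\bigr)+\bigl(\mathbb{E}\,g(\Delta_N)-g(\Delta^{*}_N)\bigr)+\bigl(g(\Delta^{*}_N)-\xi(C)\bigr)
\]
and show that the first (fluctuation) term vanishes almost surely and the two remaining (bias) terms vanish deterministically.

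For the complexity claim I would split the cost into forming the pseudo-observations by ranking in $\OO(n\log n)$, assigning the $n$ points to the $N^{2}$ cells to build $\Delta_N$ in $\OO(n+N^{2})$, and evaluating $g(\Delta_N)$, which amounts to the matrix products behind $\tr(\Delta_N^{\top}\Delta_N M_{\xi})$ and $\tr(\Delta_N^{\top}\Delta_N)$ at naive cost $\OO(N^{3})$. The hypothesis $\kappa\le 1/3$ gives $N^{3}\le n$, so this is $\OO(n)$; the ranking therefore dominates and the total is $\OO(n\log n)$. (Exploiting the structure $M_{\xi}=TT^{\top}+T^{\top}+\tfrac13 I_n$ through row prefix sums even gives $\OO(N^{2})$ for the traces, but the crude bound already suffices.) For the approximation bias $g(\Delta^{*}_N)-\xi(C)$ I would use the convergence underlying Theorem~\ref{thm:xi_bounds}: its conditioning argument identifies $\xi(C^{\Delta^{*}_N}_{\Pi})=\xi(Y\mid\widetilde X_N)$, which increases to $\xi(Y\mid X)=\xi(C)$ as the grid refines, while Corollary~\ref{cor:xi_checkerboard} bounds $\lvert\xi(C^{\Delta^{*}_N}_{\mathrm{pd}})-\xi(C^{\Delta^{*}_N}_{\Pi})\rvert\le 1/N$; hence both averaged terms, and thus $g(\Delta^{*}_N)$, converge to $\xi(C)$ as $N\to\infty$.

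The quadratic bias and the fluctuation are where the condition on $\kappa$ is used, and I expect this to be the crux. With $E=\Delta_N-\Delta^{*}_N$, expanding the quadratic $g$ gives $\mathbb{E}\,g(\Delta_N)-g(\Delta^{*}_N)=\mathbb{E}\bigl[6\tr(E^{\top}E M_{\xi})+\tfrac12\tr(E^{\top}E)\bigr]$; using $\mathbb{E}\lVert E\rVert_F^{2}\le 1/n$ from the multinomial cell variances and the symmetrised bound $\lvert\tr(E^{\top}E M_{\xi})\rvert\le\lVert M_{\xi}\rVert_{\mathrm{op}}\lVert E\rVert_F^{2}$ with $\lVert M_{\xi}\rVert_{\mathrm{op}}=\OO(N)$, this bias is $\OO(N/n)\to0$. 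For the fluctuation I would apply the bounded-difference inequality to $g(\Delta_N)$ viewed as a function of the $n$ independent samples: replacing one sample moves two cells of $\Delta_N$ by $\pm1/n$, and since $\nabla g(\Delta)=6\Delta(M_{\xi}+M_{\xi}^{\top})+\Delta$ has entries of deterministic order $\OO(N)$ (each row of $\Delta$ summing to at most $1$ and $M_{\xi}$ having entries $\OO(N)$), each per-sample change is at most $c=\OO(N/n)$.

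Then $\sum_{k=1}^{n}c^{2}=\OO(N^{2}/n)=\OO(n^{2\kappa-1})$, and McDiarmid yields $\Prob\bigl[\lvert g(\Delta_N)-\mathbb{E}\,g(\Delta_N)\rvert\ge t\bigr]\le 2\exp(-c' t^{2}n^{1-2\kappa})$. As $\kappa\le 1/3<1/2$ the exponent grows polynomially in $n$, so these probabilities are summable and Borel--Cantelli (intersecting over a sequence $t\downarrow0$) gives the almost-sure fluctuation bound. The hardest point is exactly this deterministic per-sample Lipschitz estimate with the $N$-growing weight $M_{\xi}$, together with checking that the resulting concentration exponent stays summable; combining it with the two bias estimates yields $\xi^{\kappa}_n\to\xi(C)$ almost surely.
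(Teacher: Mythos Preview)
Your complexity argument coincides with the paper's: ranking costs $\OO(n\log n)$ and dominates the $\OO(N^3)\le\OO(n)$ matrix work when $\kappa\le 1/3$. For the convergence, however, the paper takes a much shorter path: it simply invokes \cite[Theorem~4.2]{Ansari-Fuchs-2023} for the almost-sure convergence of the checkerboard estimator to $\xi(C)$, and disposes of the extra $\tfrac12\tr(\Delta_N^\top\Delta_N)$ term via the elementary bound $\sum_{i,j}\Delta_{i,j}^2\le 1/N\to 0$. Your fluctuation/quadratic-bias/approximation-bias decomposition is a genuinely different, self-contained route that actually shows \emph{where} the restriction on $\kappa$ is used, whereas the paper outsources this entirely to the cited reference.

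Two points in your sketch deserve tightening. First, $\lVert M_\xi\rVert_{\mathrm{op}}=\Theta(N^2)$, not $\OO(N)$: the dominant block is $TT^\top$ and $\lVert T\rVert_{\mathrm{op}}=\Theta(N)$. This only worsens your quadratic-bias bound to $\OO(N^2/n)=\OO(n^{2\kappa-1})$, which still vanishes for $\kappa\le 1/3$, so the conclusion survives. Second, and more substantively, the bounded-difference step is not innocuous once you pass to pseudo-observations: replacing one raw sample perturbs the ranks of \emph{all} other samples by up to $1/n$, so potentially many of them---not just two---can cross cell boundaries of width $1/N$. Your per-sample Lipschitz constant is correct when the samples are drawn directly from $C$ (so the cell counts are genuinely multinomial and one replacement touches only two cells), but in the ranked setting you need an additional argument---for instance exploiting that the row and column sums of $\Delta_N$ are pinned near $1/N$ by construction---to control the aggregate effect. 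This is precisely the kind of technicality that the Ansari--Fuchs result the paper leans on is designed to absorb.
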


\begin{proof}
    The ranks \(R_k^X\) and \(R_k^Y\) can be computed in time \(\OO(n\log n)\).
    Since \(K_n\le n\) for all sufficiently large \(n\), each rank interval \(J_r^{(n)}\) intersects at most two intervals of the coarser \(K_n\)-grid in each coordinate.
    Hence the construction of \(\widehat\Delta_{n,K_n}\) requires only \(\OO(n)\) nonzero overlap contributions after the rank transformation.
    Matrix multiplication of \(K_n\times K_n\)-matrices is possible in \(\OO(K_n^3)\) time, and \(K_n^3=\OO(n^{3\kappa})\le \OO(n)\) whenever \(\kappa\le 1/3\).
    Thus the rank computation is the bottleneck, and the overall computational complexity is \(\OO(n\log n)\).

    The almost sure convergence of the checkerboard part of \(\xi_n^\kappa\) to \(\xi(C)\) follows as in \cite[Theorem 4.2]{Ansari-Fuchs-2023}.
    It remains to note that the local-perfect-dependence correction vanishes.
    Since \(\widehat\Delta_{n,K_n}\) is a checkerboard matrix, each of its row sums equals \(1/K_n\), and therefore
    \[
        \tr\!\left(
            \widehat\Delta_{n,K_n}^\top
            \widehat\Delta_{n,K_n}
        \right)
        =
        \sum_{i,j=1}^{K_n}
        \left(\widehat\Delta_{n,K_n}\right)_{i,j}^2
        \le
        \sum_{i=1}^{K_n}
        \left(
            \sum_{j=1}^{K_n}
            \left(\widehat\Delta_{n,K_n}\right)_{i,j}
        \right)^2
        =
        \frac{1}{K_n}
        \xrightarrow[n\to\infty]{}0,
    \]
    because \(K_n=\lfloor n^\kappa\rfloor\to\infty\) for \(0<\kappa\le 1/3\).
    Hence \(\xi_n^\kappa\to \xi(C)\) almost surely.
\end{proof}

The estimator $\xi_n^\kappa$ in \eqref{eq:xi_estimator} is the arithmetic mean of two natural checkerboard quantities: the lower checkerboard value
\[
    \underline{\xi_n^\kappa}
    =
    6 \tr\!\left(
        \Delta_{\lfloor n^\kappa\rfloor}^{\top}
        \Delta_{\lfloor n^\kappa\rfloor}
        M_\xi
    \right)-2
\]
corresponding to the checkerboard copula $C_\Pi^\Delta$, and the upper local-perfect-dependence value
\[
    \overbar{\xi_n^\kappa}
    =
    6 \tr\!\left(
        \Delta_{\lfloor n^\kappa\rfloor}^{\top}
        \Delta_{\lfloor n^\kappa\rfloor}
        M_\xi
    \right)
    +
    \tr\!\left(
        \Delta_{\lfloor n^\kappa\rfloor}^{\top}
        \Delta_{\lfloor n^\kappa\rfloor}
    \right)
    -2
\]
corresponding to $C_\nearrow^\Delta$.
Thus
\[
    \xi_n^\kappa
    =
    \frac12
    \left(
        \underline{\xi_n^\kappa}
        +
        \overbar{\xi_n^\kappa}
    \right).
\]
The lower quantity tends to be conservative because it replaces the within-cell dependence by independence, whereas the upper quantity inserts perfect positive dependence within each occupied cell.
The mean therefore balances two opposite finite-sample effects, while still taking values in $[0,1]$.
This is in contrast to the nearest-neighbour estimator $\xi_n$ in \eqref{estTn}, which can be negative in finite samples; see also \cite[Remark~3]{chatterjee2021}.

We now turn to the finite-sample behaviour of the checkerboard estimators, first examining the effect of the grid resolution and then comparing them with the classical nearest-neighbour estimator $\xi_n$ from \eqref{estTn}.
The tuning parameter is the grid exponent $\kappa$.
A value of $\kappa$ close to zero gives a very coarse checkerboard and hence smooths away dependence, whereas a value close to one produces many sparsely populated cells and can overfit the empirical copula.
The theoretical constraint in Theorem~\ref{thm:xi_estimator} suggests the largest computationally efficient choice, namely $\kappa=1/3$, since the matrix operations then remain of order $\OO(n\log n)$ after the rank transformation.
The following example illustrates the resulting bias--variance trade-off.

\begin{example}[Choice of the grid exponent]\label{ex:xi_estimator}
    Consider the Gaussian single-factor model
    \begin{align}\label{eq:single_factor_model}
        Z \sim \mathcal{N}(0,1), 
        \qquad
        \varepsilon \sim \mathcal{N}(0,1),
        \qquad
        X = Z + \varepsilon,
    \end{align}
    where $Z$ and $\varepsilon$ are independent.
    Then $(Z,X)$ is jointly Gaussian with correlation $1/\sqrt{2}$, and
    \[
        \xi(X|Z)
        =
        \frac{3}{\pi}
        \arcsin\left(\frac34\right)
        -\frac12
        \approx 0.3098,
    \]
    see \cite[Proposition~2.7]{ansarifuchs2022asimpleextension}.
    Figure~\ref{fig:xi_estimator_per_kappa} compares the checkerboard estimators for different values of $\kappa$.
    Small values of $\kappa$ lead to overly coarse approximations, while large values increase the finite-sample variability.
    The choice $\kappa=1/3$ provides a stable compromise and is used throughout the simulation study below. 

    \begin{figure}[htbp]
        \centering
        \includegraphics[width=\textwidth]{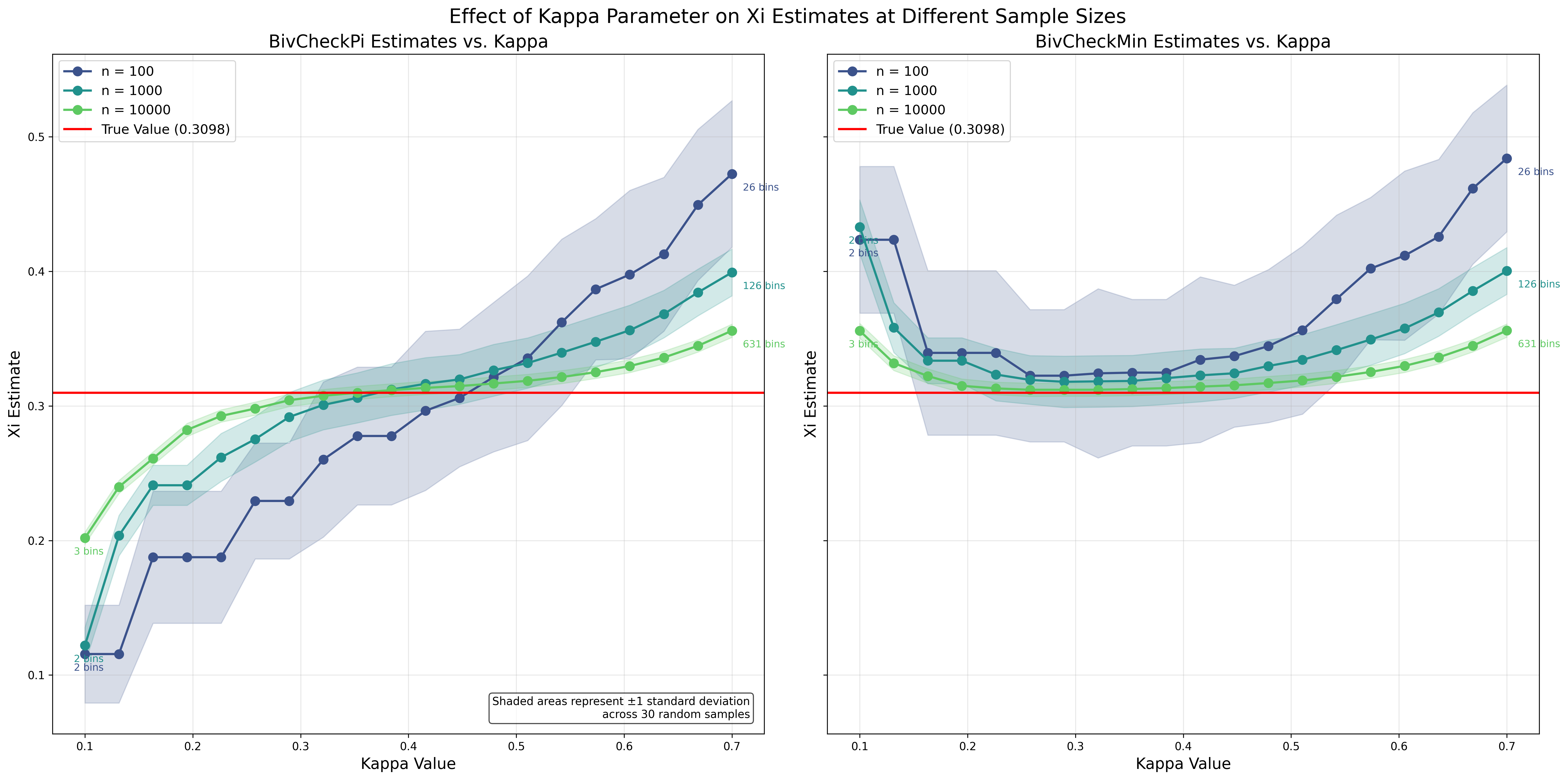}
        \caption{
            Sensitivity of the checkerboard estimator to the grid exponent $\kappa$ in the Gaussian single-factor model \eqref{eq:single_factor_model}.
            The horizontal line marks the theoretical value of $\xi$.
            Left side shows $\underline{\xi_n^{\kappa}}$ (\emph{CheckPi}), right side $\overbar{\xi_n^{\kappa}}$ (\emph{CheckMin}).
        }
        \label{fig:xi_estimator_per_kappa}
    \end{figure}
\end{example}

Having fixed the grid exponent at \(\kappa=1/3\), we next compare the checkerboard estimators with the classical nearest-neighbour estimator \(\xi_n\) from \eqref{estTn}.
In Figure \ref{fig:convergence}, we compare the precision of our implementations of $\xi^{\kappa}_n$, $\overbar{\xi^{\kappa}_n}$ and $\underline{\xi^{\kappa}_n}$ for $\kappa=\frac13$ with standard implementations of the $\xi_n$ estimator using sample data from the model in \eqref{eq:single_factor_model}.
Figure \ref{fig:simulation_speed} shows that also in terms of performance these estimators do not fall behind the standard implementations of $\xi_n$ in the \texttt{xicorpy} and \texttt{scipy} packages.

\begin{figure}[ht!]
    \centering
    \includegraphics[width=0.68\textwidth]{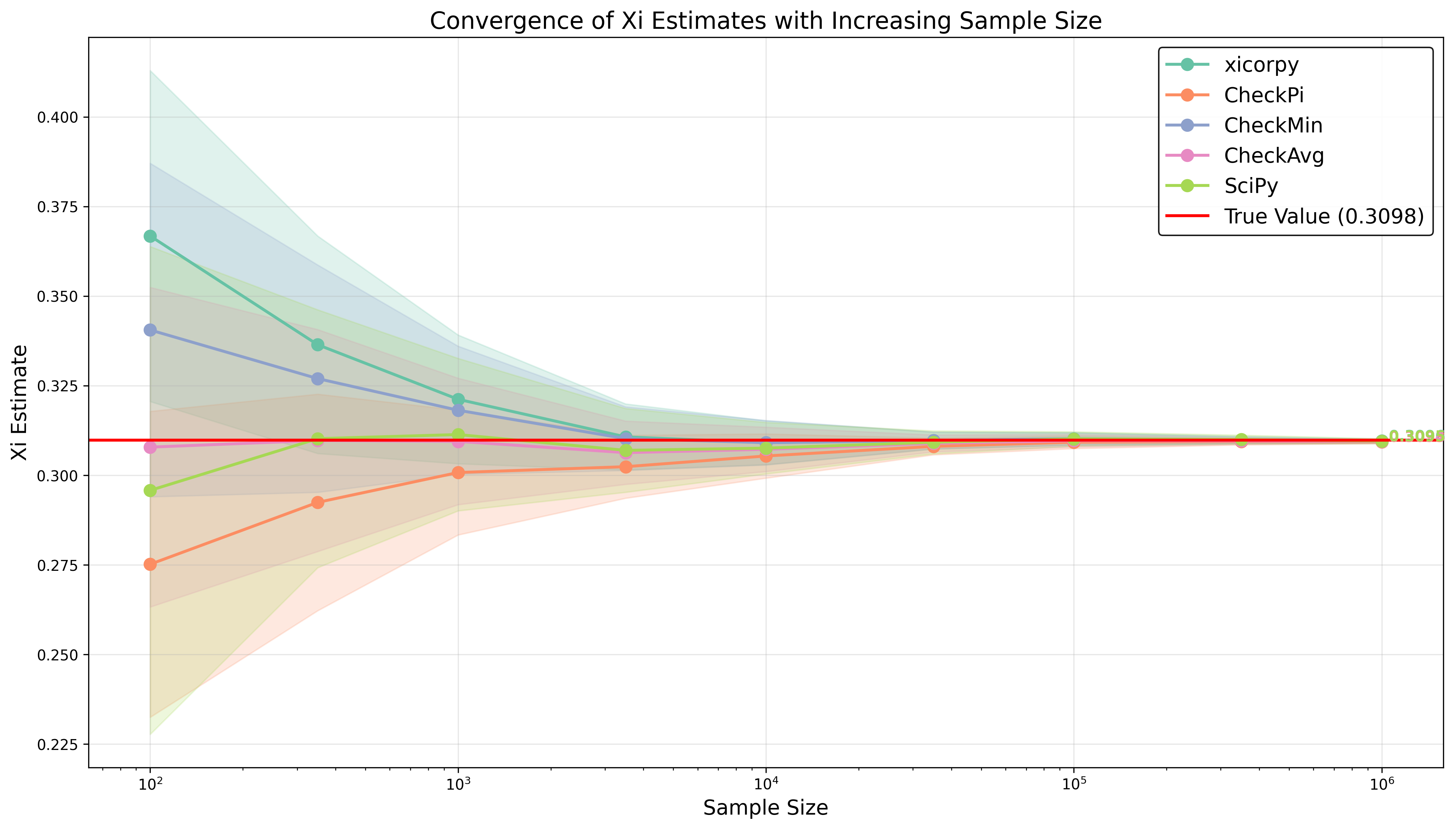}
    \caption{
        Convergence of $\xi$ estimates to the true value as sample size increases.
        The checkerboard estimate $\underline{\xi_n^{\kappa}}$ (\emph{CheckPi}) tends to underestimate the true value, while the check--min estimate $\overbar{\xi_n^{\kappa}}$ (\emph{CheckMin}) tends to overestimate it.
        $\xi^{\kappa}_n$ (\emph{CheckAvg}) is the closest to the true value at smaller sample sizes in this setting.
    }
    \label{fig:convergence}
\end{figure}

\begin{figure}[htbp]
  \centering
  \includegraphics[width=.6\linewidth]{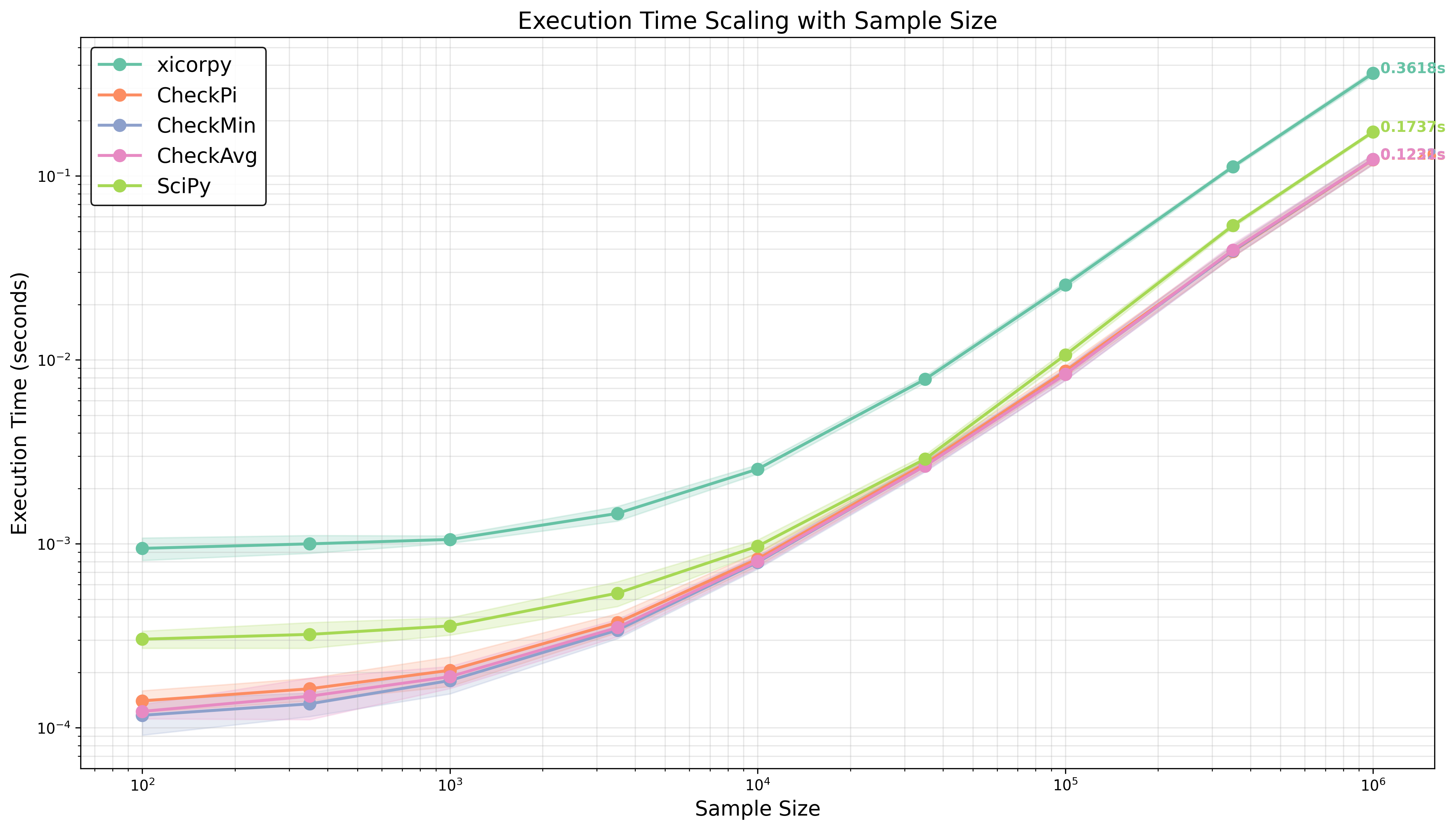}
    \caption{
        Execution time scaling for different estimation methods with increasing sample size.
        Our implementation of $\xi^{\kappa}_n$ outperforms the implementations of $\xi_n$ in \texttt{xicorpy} approximately by a factor of three and the implementation in \texttt{scipy} by approximately 30 \%.
    }
  \label{fig:simulation_speed}
\end{figure}

We now compare $\xi_n^\kappa$ with the nearest-neighbour estimator $\xi_n$ from \eqref{estTn} for different copula families and different levels of dependence.
More precisely, the simulation uses four copula families---Gaussian, Clayton,
Gumbel--Hougaard and Frank---at three calibrated dependence levels.  The labels
low, moderate and strong correspond approximately to the target values
\[
    \xi \approx 0.05,\qquad
    \xi \approx 0.30,\qquad
    \xi \approx 0.65,
\]
respectively.  For each copula family, the parameter is chosen so that the corresponding reference value of Chatterjee's \(\xi\) is close to the target.
The resulting parameter values are therefore comparable across families in the sense that they correspond to approximately the same value of Chatterjee's \(\xi\), although the local dependence structure may still differ substantially between copula families.
For each family, dependence level and sample size
\[
    n\in\{100,200,500,1000,2000,5000\},
\]
we generate $300$ Monte Carlo samples.
For the Gaussian copula the reference value is computed from the closed form
\[
    \xi(C_\rho)
    =
    \frac{3}{\pi}
    \arcsin\!\left(\frac{1+\rho^2}{2}\right)
    -\frac12,
\]
see \cite[Ex.~4]{fuchs2024quantifying}, whereas for the remaining families we use a large-sample approximation based on $3\cdot 10^5$ observations.
The reported root mean squared error (RMSE) values are therefore Monte Carlo
estimates of the finite-sample error relative to a fixed reference value.
The aggregated results are summarized in Table~\ref{tab:simulation_summary},
while Figure~\ref{fig:simulation_rmse} displays the corresponding RMSE curves
separately by copula family and dependence level.
Figure~\ref{fig:simulation_boxplots} complements these summaries by showing the
full distribution of selected estimates at moderate dependence.
The comparison reveals a clear dependence-regime effect.
For weak dependence, the lower checkerboard estimator $\underline{\xi_n^\kappa}$ is particularly competitive.
This is consistent with the construction: when the underlying copula is close to independence, replacing the within-cell structure by independence introduces little structural error and reduces sampling variability.
For strong dependence, the nearest-neighbour estimator $\xi_n$ is typically preferable, since local rank information is well suited to detecting nearly functional relationships.
The intermediate regime is the most balanced one: here the upper checkerboard value $\overbar{\xi_n^\kappa}$ and the averaged estimator $\xi_n^\kappa$ often perform close to, and sometimes better than, the nearest-neighbour estimator.

\begin{table}[htbp]
  \centering
  \begin{tabular}{llcccc}
    \toprule
    Dep. level & $n$ & \textsc{CheckAvg} & \textsc{CheckPi} & \textsc{CheckMin} & $\xi_n$ \\
    \midrule
    Low & 100 & $0.054^{***}$ & $0.031^{***}$ & $0.087$ & $0.068$ \\
     & 500 & $0.023^{***}$ & $0.016^{***}$ & $0.033$ & $0.030$ \\
     & 1000 & $0.016^{***}$ & $0.012^{***}$ & $0.022$ & $0.021$ \\
     & 5000 & $0.006^{***}$ & $0.005^{***}$ & $0.007^{***}$ & $0.009$ \\
    \addlinespace[0.25em]
    \midrule
    Moderate & 100 & $0.055^{***}$ & $0.073$ & $0.073$ & $0.070$ \\
     & 500 & $0.025^{***}$ & $0.032$ & $0.028^{***}$ & $0.032$ \\
     & 1000 & $0.018^{***}$ & $0.022$ & $0.019^{***}$ & $0.022$ \\
     & 5000 & $0.008^{***}$ & $0.009^{***}$ & $0.008^{***}$ & $0.011$ \\
    \addlinespace[0.25em]
    \midrule
    Strong & 100 & $0.091$ & $0.157$ & $0.051$ & $0.046$ \\
     & 500 & $0.040$ & $0.067$ & $0.020$ & $0.019$ \\
     & 1000 & $0.028$ & $0.045$ & $0.014$ & $0.014$ \\
     & 5000 & $0.010$ & $0.015$ & $0.006^{***}$ & $0.006$ \\
    \bottomrule
  \end{tabular}
\caption{
    Family-averaged RMSE of Chatterjee \(\xi\) estimators with \(\kappa=\tfrac{1}{3}\).
    Entries are RMSEs averaged over the Gaussian, Clayton, Gumbel--Hougaard, and Frank copula families.
    Each cell is based on \(300\) Monte Carlo replications per copula family.
    Stars indicate significantly smaller squared error than \(\xi_n\), based on paired one-sided \(t\)-tests pooled over families:
    \(^{*}p<0.1\), \(^{**}p<0.05\), \(^{***}p<0.01\).
    The results indicate that \textsc{CheckPi} is particularly competitive in the low-dependence regime, while \textsc{CheckMin} is the most competitive checkerboard variant in the high-dependence regime and approaches the nearest-neighbour estimator.
}
  \label{tab:simulation_summary}
\end{table}

\begin{figure}[htbp]
  \centering
  \includegraphics[width=0.7\linewidth]{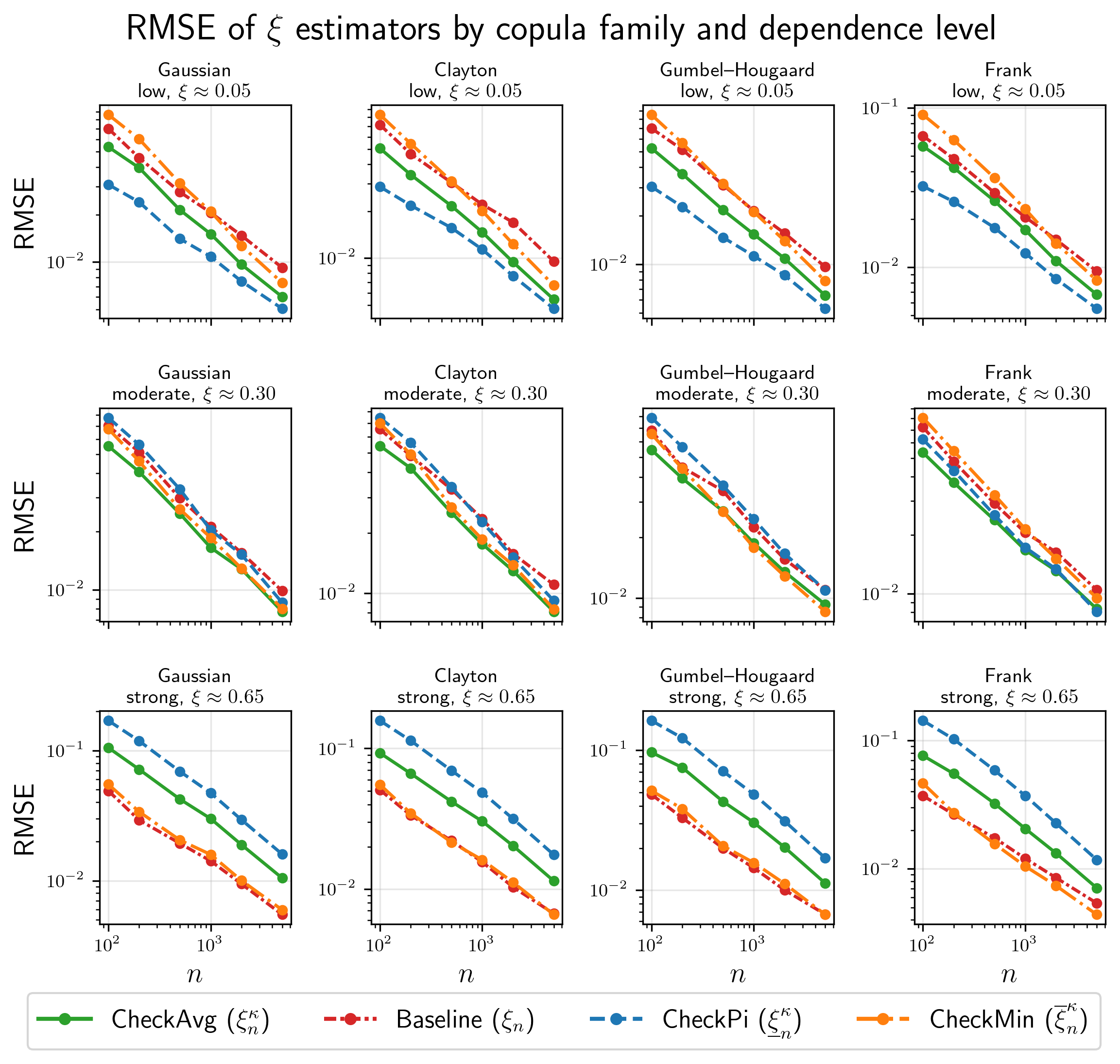}
  \caption{
      RMSE of the four estimators on a log--log scale.
      Columns correspond to copula families and rows to dependence levels.
      Each curve is based on $300$ Monte Carlo replications per setting.
  }
  \label{fig:simulation_rmse}
\end{figure}

\begin{figure}[htbp]
  \centering
  \includegraphics[width=.6\linewidth]{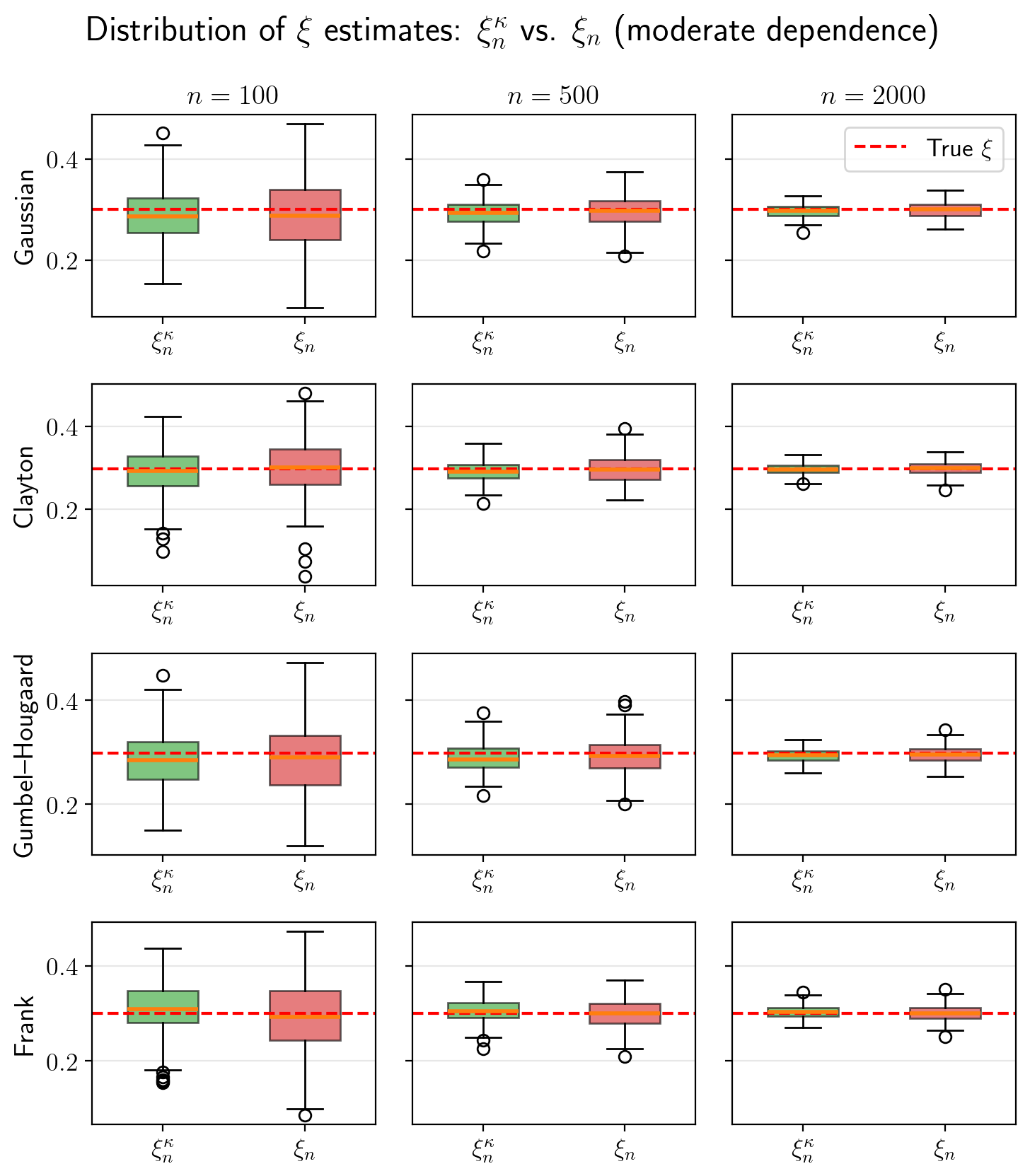}
  \caption{
      Distribution of $\xi_n^\kappa$ and $\xi_n$ across $300$ Monte Carlo replications at moderate dependence.
      The dashed horizontal line marks the reference value of $\xi(C)$.
  }
  \label{fig:simulation_boxplots}
\end{figure}

The averaged estimator $\xi_n^\kappa$ is therefore best viewed as a robust compromise rather than as a uniformly dominating estimator.
It dampens the downward bias of $\underline{\xi_n^\kappa}$ and the upward bias of $\overbar{\xi_n^\kappa}$, remains confined to the natural interval $[0,1]$, and has the same order of computational complexity as $\xi_n$ for the choice $\kappa=1/3$.
This makes it attractive in applications where stable bounded estimates are preferred, especially in low and moderate dependence regimes.

The boxplots in Figure~\ref{fig:simulation_boxplots} illustrate that the
differences reported in Table~\ref{tab:simulation_summary} and
Figure~\ref{fig:simulation_rmse} are not driven only by a few outlying samples.
Although the nearest-neighbour estimator from \eqref{estTn} is strongly
consistent (see \cite{chatterjee2020,chatterjee2021}) and has a well-developed asymptotic theory, the empirical performance of the checkerboard-based estimators remains competitive.
Together with Table~\ref{tab:simulation_summary} and Figure~\ref{fig:simulation_rmse}, this indicates that the explicit checkerboard formulas are not merely useful for analytic copulas: they also yield practical finite-sample estimators of Chatterjee's $\xi$.

\section{Appendix}
\label{sec:appendix}

\begin{proof}[Proof of Proposition~\ref{lem:bernstein_measures_of_association}]
  \label{proof:bernstein_measures_of_association}~\\
  Let $C = C_B^D$ be the Bernstein copula associated with the $m\times n$-grid copula matrix $D$.
  The formulas for Spearman's rho and Kendall's tau can be obtained as straightforward extensions of the computations in \cite[Theorems~9 and~10]{durrleman2000copulas}.
  Upper and lower tail dependence coefficients for Bernstein copulas are always equal to zero due to the boundedness of the density, see, e.g., \cite[Example 1]{pfeifer2016new} for the $m=n$ case.
  The rectangular case is again analogous.
  The rest of the proof is dedicated to deriving the formula for Chatterjee's xi. Recall that \(\xi(C)\) can be written as
  \[
    \xi(C) = 6 \int_{[0,1]^2}\left(\partial_1 C(u,v)\right)^2 \de \lambda^2(u,v)-2
  ,\]
  Hence, we need to evaluate the integral for $C_B^D$.

\medskip\noindent
\textbf{Step 1: Derivative of \(\partial_1 B_{i,m}(u)\).}\\
Write
\begin{align}\label{eq:bernstein_copula_binom}
B_{i,m}(u)=\binom{m}{i}u^i(1-u)^{m-i}.
\end{align}
We distinguish whether $i < m$ or $i=m$.

\smallskip
\noindent
\underline{Case 1:} $1 \le i < m$.\\
A direct product rule and factoring out yields
\[
    \partial_1 B_{i,m}(u)=\binom{m}{i}\left(i - mu\right)u^{i-1}(1-u)^{m-i-1}.
\]
\noindent
\underline{Case 2:} $i = m$.\\
Since $B_{m,m}(u) = u^m$, it is
\[
    \partial_1 B_{m,m}(u)=mu^{m-1}.
\]

\medskip
\noindent
\textbf{Step 2: Derivative of the Bernstein copula.}\\
Recall from \eqref{eq:bernstein_copula} that
\[
    C_B^D(u,v)=\sum_{i=1}^m\sum_{j=1}^n D_{i,j}B_{i,m}(u)B_{j,n}(v).
\]
Thus,
\[
    \partial_1 C_B^D(u,v)=\sum_{i=1}^m\sum_{j=1}^n  D_{i,j}  \partial_1 B_{i,m}(u) B_{j,n}(v).
\]
In the integral for Chatterjee's xi, we need to square this expression and get
\begin{align}\label{eq:partial1_C_B_D}
\left(\partial_1 C_B^D(u,v)\right)^2
= \sum_{i,r=1}^m\sum_{j,s=1}^n  D_{i,j}D_{r,s}  \partial_1 B_{i,m}(u)\partial_1 B_{r,m}(u)  B_{j,n}(v)B_{s,n}(v).
\end{align}

\medskip
\noindent
\textbf{Step 3: Factorize the double integral.}\\
We must integrate \eqref{eq:partial1_C_B_D} over $(u,v)\in[0,1]^2$.
Note that
$\partial_1 B_{i,m}(u)\partial_1 B_{r,m}(u)$ depends \emph{only} on $u$,
while $B_{j,n}(v)B_{s,n}(v)$ depends \emph{only} on $v$.
Hence,
\small
\[
\int_0^1 \int_0^1 \left(\partial_1 C_B^D(u,v)\right)^2\de u\de v
=\sum_{i,r=1}^m\sum_{j,s=1}^n  D_{i,j}D_{r,s}  \Bigl[
    \underbrace{
    \int_0^1 \partial_1 B_{i,m}(u)\partial_1 B_{r,m}(u)\de u }_{=:\Upsilon_{i,r}}
    \Bigr]\Bigl[
    \underbrace{ \int_0^1 B_{j,n}(v)B_{s,n}(v)\de v}_{=:\Lambda_{j,s}}
    \Bigr].
\]
\normalsize
With the matrices $\Upsilon = \left(\Upsilon_{i,r}\right)_{i,r=1}^m$ and $\Lambda = \left(\Lambda_{j,s}\right)_{j,s=1}^n$, we can write the double sum/integral as
\[
\sum_{i,r=1}^m\sum_{j,s=1}^n D_{i,j}D_{r,s}\Upsilon_{i,r}\Lambda_{j,s}
= \tr\left(\Upsilon D\Lambda D^{\mathsf{T}}\right),
\]
so that
\[
\xi\left(C_B^D\right)
=6\tr\left(\Upsilon D\Lambda D^{\mathsf{T}}\right)-2.
\]

\medskip
\noindent
\textbf{Step 4: Explicit form of \(\Lambda\).}\\
By \eqref{eq:bernstein_copula_binom}, we have
\[
    \Lambda_{j,s} = \int_0^1 \binom{n}{j}\binom{n}{s}  v^{j+s}(1-v)^{2n - j-s} \de v.
\]
A standard Beta‐integral identity for nonnegative integers $p,q$ is
\begin{align}\label{eq:beta_integral}
    \int_0^1 x^p (1-x)^q \de x = \frac{p!\,q!}{(p+q+1)!}
,\end{align}
see, e.g., \cite{weisstein2002beta}.
Here, $p = j+s$ and $q = 2n - j-s$, so
\[
    \Lambda_{j,s}
    =\binom{n}{j}\binom{n}{s}\frac{(j+s)!\left(2n - j-s\right)!}{(2n+1)!}
    =\frac{\binom{n}{j}\binom{n}{s}}{(2n+1)\binom{2n}{j+s}},
\]
as specified in Section \ref{subsec:bernstein_copulas_measures_of_association}.

\medskip
\noindent
\textbf{Step 5: Explicit form of \(\Upsilon\).}\\
Recall from Step 1 that in
\[
    \Upsilon_{i,r} = \int_0^1 \partial_1 B_{i,m}(u)\partial_1 B_{r,m}(u)
\]
it is
\[
\partial_1 B_{i,m}(u)
= \begin{cases}
  \binom{m}{i}(i - mu) u^{i-1}(1-u)^{m-i-1}, & (1 \le i < m) \\[0.75em]
  mu^{m-1}, & (i=m)
\end{cases}
.\]
Hence, we must consider four cases for the pair \((i,r)\):

\smallskip
\noindent
\underline{(a)~$1 \le i < m$ and $1 \le r < m$.}\\
Then
\[
\partial_1 B_{i,m}(u)\partial_1 B_{r,m}(u)
= \binom{m}{i}\binom{m}{r}(i - mu)(r - mu)u^{i-1 + r-1}(1-u)^{m-i-1 + m-r-1}.
\]
Expanding $(i - mu)(r - mu)$ yields
\[
\Upsilon_{i,r}
= \binom{m}{i}\binom{m}{r}\int_0^1  \left[ ir - m(i+r)u + m^2u^2  \right]  u^{i+r-2}  (1-u)^{2m- i - r -2}\de u.
\]
Splitting into three Beta‐type integrals:
\begin{multline*}
\Upsilon_{i,r}
= \binom{m}{i}\binom{m}{r} \biggl[
  ir \int_0^1 u^{i+r-2}(1-u)^{2m- i - r -2}\de u \\
  - m(i+r) \int_0^1 u^{i+r-1}(1-u)^{2m- i - r -2}\de u
  + m^2 \int_0^1 u^{i+r}(1-u)^{2m- i - r -2}\de u
\biggr].
\end{multline*}
Using the Beta integrals from \eqref{eq:beta_integral} the three integrals become
\[
\frac{(i+r-2)!\left(2m- i - r -2\right)!}{(2m-3)!},
\quad \frac{(i+r-1)!\left(2m- i - r -2\right)!}{(2m-2)!},
\quad\frac{(i+r)!\left(2m- i - r -2\right)!}{(2m-1)!},
\]
and we now have
\begin{multline*}
\Upsilon_{i,r}
=\binom{m}{i}\binom{m}{r}
\biggl[
ir \frac{(i+r-2)!\left(2m- i - r -2\right)!}{(2m-3)!} \\
- m(i+r)\frac{(i+r-1)!\left(2m- i - r -2\right)!}{(2m-2)!} + m^2\frac{(i+r)!\left(2m- i - r -2\right)!}{(2m-1)!}
\biggr].
\end{multline*}
This expression can be simplified by rewriting each fraction so that all terms share the denominator \((2m-1)!\), i.e. using
\[
    \frac{(i+r-2)!(2m- i - r -2)!}{(2m-3)!}
    =\frac{(i+r-2)!(2m- i - r -2)!\left[(2m-1)(2m-2)\right]}{(2m-1)!},
\]
\[
    \frac{(i+r-1)!(2m- i - r -2)!}{(2m-2)!}
    =\frac{(i+r-1)!(2m- i - r -2)!\left[(2m-1)\right]}{(2m-1)!}
\]
we obtain
\begin{align*}
    \Upsilon_{i,r}
    ~ =~ & \binom{m}{i}\binom{m}{r} \frac{1}{(2m-1)!}    \biggl[ ir(2m-1)(2m-2)(i+r-2)!\left(2m- i - r -2\right)! \\
    & - m(i+r)(2m-1)(i+r-1)!\left(2m- i - r -2\right)! + m^2(i+r)!\left(2m- i - r -2\right)! \biggr]. \\
    =~ &\binom{m}{i}\binom{m}{r}\frac{(i+r-2)!\left(2m- i - r -2\right)!}{(2m-1)! } \left[ (2m-1)(2m-2)ir  -  2m(m-1)\binom{i+r}{2}\right] \\
    =~ & \frac{\binom{m}{i}\binom{m}{r}}{(2m-3)\binom{2m-4}{i+r-2}}
    \left[ ir   -   \frac{2m(m-1)\binom{i+r}{2}}{(2m-1)(2m-2)}\right]
.\end{align*}
\smallskip
\noindent
\underline{(b)~$1 \le i < m$ and $r = m$.}\\
Now, by Step 1 it is
\[
\partial_1 B_{i,m}(u)\partial_1 B_{m,m}(u)
=m\binom{m}{i}  (i - mu)  u^{m + i -2}  (1-u)^{m-i-1}
\]
and
\[
\Upsilon_{i,m}
=m\binom{m}{i}\int_0^1  \left(i - mu\right)  u^{m + i - 2}  (1-u)^{m-i-1}\de u.
\]
Splitting the factor $(i - mu)$:
\[
\Upsilon_{i,m}
= m\binom{m}{i} \biggl[
  i  \int_0^1    u^{m + i - 2}(1-u)^{m-i-1}  \de u 
  -  m  \int_0^1    u^{m + i - 1}(1-u)^{m-i-1}  \de u
\biggr].
\]
Here, use again $p!q!/(p+q+1)!$ with appropriate exponents.
For the first integral, choose $p = (m + i - 2)$ and $q = (m-i-1)$,  and for the second integral choose $p = (m + i - 1)$ and $q = (m-i-1)$.
Then, one gets
\begin{align*}
    \int_0^1  u^{m + i - 2}(1-u)^{m-i-1}\de u
    &=\frac{(m + i - 2)!(m - i - 1)!}{(2m-2)!}, \\
    \quad\int_0^1  u^{m + i - 1}(1-u)^{m-i-1}\de u
    &=\frac{(m + i - 1)!(m - i - 1)!}{(2m-1)!}.
\end{align*}
Thus,
\begin{align*}
    \Upsilon_{i,m}
    \quad = \quad &m\binom{m}{i}\biggl[  i\frac{(m + i - 2)!(m - i - 1)!}{(2m-2)!} -  m\frac{(m + i - 1)!(m - i - 1)!}{(2m-1)!}\biggr] \\
    = \quad & m\binom{m}{i} \frac{(m + i -2)!(m - i -1)!}{(2m-1)!}(m-1)(i-m) \\
    = \quad & \frac{m(m-1)(i-m)\binom{m}{i}}{(2m-1)(2m-2)\binom{2m-3}{m+i-2}}
.\end{align*}

\smallskip
\noindent
\underline{(c)~$i = m$ and $1 \le r < m$.}\\
By symmetry, or by the same direct calculation,
\[
\Upsilon_{m,r}=\frac{m(m-1)(r-m)\binom{m}{r}}{(2m-1)(2m-2)\binom{2m-3}{m+r-2}}
.\]

\smallskip
\noindent
\underline{(d)~$i = m$ and $r = m$.}\\
Here,
\[
\Upsilon_{m,m}
=\int_0^1  \left[mu^{m-1}\right]^2\de u
=m^2\int_0^1  u^{2m-2}\de u
=m^2\left[\frac{u^{2m-1}}{2m-1}\right]_0^1
=\frac{m^2}{2m-1}.
\]
\medskip
Putting these four sub-cases (a)--(d) together provides the complete piecewise formula for \(\Upsilon_{i,r}\) that is specified in Section \ref{subsec:bernstein_copulas_measures_of_association}.
This completes the proof.
\end{proof}

\begin{lemma}[Permutation Sum Identities]\label{lem:perm_identities}~\\
    Let $\pi \in \mathfrak{S}_n$ be a permutation of $\{1, 2, \dots, n\}$ and let $d_i = \pi(i) - i$. Then the following identities hold:
    \begin{enumerate}[label=(\roman*)]
        \item $\displaystyle \sum_{i=1}^{n} d_i = 0$
        \item $\displaystyle \sum_{i=1}^{n} d_i(2i-1) = -\sum_{i=1}^{n} d_i^{2}$
    \end{enumerate}
\end{lemma}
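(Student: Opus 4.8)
The plan is to reduce both identities to a single elementary fact: since $\pi$ merely reorders the indices $\{1,\dots,n\}$, every index-symmetric sum is invariant under $\pi$, that is, $\sum_{i=1}^n g(\pi(i)) = \sum_{i=1}^n g(i)$ for any function $g$. I will only need the two special cases $\sum_{i=1}^n \pi(i) = \sum_{i=1}^n i$ and $\sum_{i=1}^n \pi(i)^2 = \sum_{i=1}^n i^2$, both immediate from this invariance.

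For part (i), I would write $\sum_{i=1}^n d_i = \sum_{i=1}^n \pi(i) - \sum_{i=1}^n i$ and apply the first invariance to conclude that the sum vanishes.

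For part (ii), the approach is to expand both sides as polynomials in $i$ and $\pi(i)$ and match them. Expanding $d_i^2 = \pi(i)^2 - 2i\pi(i) + i^2$ and using $\sum \pi(i)^2 = \sum i^2$ gives $-\sum_{i=1}^n d_i^2 = 2\sum_{i=1}^n i\,\pi(i) - 2\sum_{i=1}^n i^2$. On the other side, expanding $d_i(2i-1) = (\pi(i)-i)(2i-1) = 2i\pi(i)-\pi(i)-2i^2+i$ and collecting terms, the linear contributions $-\sum \pi(i) + \sum i$ cancel by part (i), leaving precisely $2\sum_{i=1}^n i\,\pi(i) - 2\sum_{i=1}^n i^2$. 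Comparing the two expressions yields the claimed identity.

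The calculation is entirely routine, so there is no real obstacle; the only point worth flagging is that the cross term $\sum_{i=1}^n i\,\pi(i)$ has no simple closed form and genuinely depends on $\pi$. The argument must therefore be arranged so that this term cancels between the two sides rather than being evaluated. Because it appears with the identical coefficient $2$ on both sides, no closed form is required, and the identity holds for every permutation $\pi$.
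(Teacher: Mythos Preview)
Your proof is correct and follows essentially the same route as the paper: both arguments reduce (i) and (ii) to the invariances $\sum_i \pi(i)=\sum_i i$ and $\sum_i \pi(i)^2=\sum_i i^2$, expand $d_i(2i-1)$ and $d_i^2$, and observe that each side simplifies to $2\sum_i i\,\pi(i)-2\sum_i i^2$. Your remark that the cross term $\sum_i i\,\pi(i)$ need only match rather than be evaluated is a nice clarifying touch, but the underlying logic is the same.
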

    
\begin{proof}
    We use the properties that for any permutation $\pi \in \mathfrak{S}_n$:
    (a) $\sum_{i=1}^{n} \pi(i) = \sum_{i=1}^{n} i$ and (b) $\sum_{i=1}^{n} \pi(i)^2 = \sum_{i=1}^{n} i^2$.
    The first identity is straightforward.
    For the second identity, let's first expand the left-hand side (LHS):
    \begin{align*}
        \sum_{i=1}^{n} d_i(2i-1)
        \quad&=\quad2\sum_{i=1}^{n} i\pi(i) - \sum_{i=1}^{n} \pi(i) - 2\sum_{i=1}^{n} i^2 + \sum_{i=1}^{n} i \\
        &\stackrelmt{(a)}{=} \quad2\sum_{i=1}^{n} i\pi(i) - \left(\sum_{i=1}^{n} i\right) - 2\sum_{i=1}^{n} i^2 + \left(\sum_{i=1}^{n} i\right) \\
        &= \quad2\sum_{i=1}^{n} i\pi(i) - 2\sum_{i=1}^{n} i^2
    .\end{align*}
    Next, let's expand the term $\sum_{i=1}^{n} d_i^2$ from the right-hand side (RHS):
    \begin{align*}
        \sum_{i=1}^{n} d_i^2
        = \sum_{i=1}^{n} \pi(i)^2 - 2\sum_{i=1}^{n} i\pi(i) + \sum_{i=1}^{n} i^2
        \stackrelmt{(b)}{=} \sum_{i=1}^{n} i^2 - 2\sum_{i=1}^{n} i\pi(i) + \sum_{i=1}^{n} i^2
        = 2\sum_{i=1}^{n} i^2 - 2\sum_{i=1}^{n} i\pi(i)
    .\end{align*}
    Comparing the two resulting terms, we see that:
    \[
        \sum_{i=1}^{n} d_i(2i-1) 
        = - \left( 2\sum_{i=1}^{n} i^2 - 2\sum_{i=1}^{n} i\pi(i) \right) 
        = -\sum_{i=1}^{n} d_i^2.
    \]
    This establishes the second identity.
\end{proof}

\begin{proof}[Proof of Proposition \ref{prop:shuffle_measures}]\label{proof:shuffle_measures}
    First, for Kendall’s tau, let $(U_1,V_1),(U_2,V_2)\sim C_{\pi}$ be independent from each other and write $I=\lceil nU_1\rceil,\,J=\lceil nU_2\rceil$ for the indices of the segments on which the two points fall.
    The random variables $I$ and $J$ are i.i.d.\ and uniform on
    $\{1,\dots ,n\}$, so
    \[
        \mathbb{P}[I=i,J=j]=\frac1{n^2}\qquad(i,j=1,\dots ,n).
    \]
    If $I\neq J$, the sign of $(U_1-U_2)(V_1-V_2)$ is completely determined by the permutation:
    \begin{itemize}
        \item $I<J,\pi(I)<\pi(J)$ or $I>J,\pi(I)>\pi(J)\implies$ concordance;
        \item $I<J,\pi(I)>\pi(J)$ or $I>J,\pi(I)<\pi(J)\implies$ discordance.
    \end{itemize}
    Because $I$ and $J$ are chosen \emph{with} replacement, ties $I=J$ occur with probability $\mathbb{P}[I=J]=1/n$.
    Since $N_{\mathrm{inv}}(\pi)$ denotes the number of inversions of $\pi$, the probabilities are
    \[
        p_{\text{disc}}=\frac{\displaystyle 2N_{\mathrm{inv}}(\pi)}{n^2},
        \quad p_{\text{conc}} =1-p_{\text{disc}}.
    \]
    Hence, by the concordant–discordant definition given in \eqref{eq:tau_concordance}, it is
    \[
        \tau(C_\pi)=p_{\text{conc}}-p_{\text{disc}}
        = 1-\frac{4N_{\mathrm{inv}}(\pi)}{n^2}.
    \]     
    Regarding Spearman’s rho, fix a segment index $i$ and write the rank displacement
    $d_i:=\pi(i)-i$.
    The support of $C_\pi$ is the union of $n$ diagonal line segments
    \[
        S_i\coloneq\left\{\left(\frac{i-1+t}{n},\frac{\pi(i)-1+t}{n}\right)~\middle|~ t\in[0,1]\right\}
    .\]
    Each carries probability mass $1/n$. On $S_i$ the coordinates are related by $V=U+\frac{d_i}{n}$, so $UV=U^2+\frac{d_i}{n}U$.
    With $t\sim\operatorname{Unif}[0,1]$ and $U=(i-1+t)/n$, the conditional expectation is given by
    \[
    \mathbb{E}[U\mid I=i]
    =\int_{0}^{1}\frac{i-1+t}{n}\,dt
    =\frac{2i-1}{2n},
    ,\]
    so that
    \[
    \mathbb{E}[U^{2}\mid I=i]
    =\int_{0}^{1}\frac{(i-1+t)^{2}}{n^{2}}\,dt
    =\frac{(2i-1)^{2}}{4n^{2}}+\frac{1}{12n^{2}}
    \]
    and
    \[
        \mathbb{E}\left[UV\mid I=i\right]
            =\mathbb{E}[U^{2}\mid I=i]+\frac{d_i}{n}\mathbb{E}[U\mid I=i]
            =\frac{(2i-1)^{2}}{4n^{2}}+\frac{1}{12n^{2}}
            +\frac{d_i(2i-1)}{2n^{2}}.
    \]
    Averaging over $i$ one obtains
    \[
        \mathbb{E}[UV]
        = \frac1{n}\sum_{i=1}^{n}\mathbb{E}[UV\mid I=i]
        = \underbrace{\frac1{n}\sum_{i=1}^{n} \left(\frac{(2i-1)^{2}}{4n^{2}} +\frac{1}{12n^{2}}\right)}_{\displaystyle =\mathbb{E}[U^2]=1/3} + \frac1{n}\sum_{i=1}^{n}\frac{d_i(2i-1)}{2n^{2}}.
    \]
    The first sum is $\mathbb{E}[U^2]$ for a $\operatorname{Unif}(0,1)$ variable, which equals $1/3$. 
    For the second sum, we use the second identity from Lemma \ref{lem:perm_identities}, namely $\sum_{i=1}^{n} d_i(2i-1) = -\sum_{i=1}^{n} d_i^{2}$. 
    Substituting yields:
    \[
        \mathbb{E}[UV] 
        = \frac13 + \frac{1}{2n^3} \sum_{i=1}^{n} d_i(2i-1) 
        = \frac13 - \frac{1}{2n^3} \sum_{i=1}^{n} d_i^2.
    \]
    Because $\rho_S(C)=12\mathbb{E}[UV]-3$ for any copula $C$ with uniform margins,
    \[
        \rho_S(C_\pi)
        = 12\left(\frac13-\frac{\sum_{i}d_i^{2}}{2n^{3}}\right)-3
        = 1-\frac{6\sum_{i=1}^{n}d_i^{2}}{n^3}.
    \]
    Next, regarding Chatterjee’s $\xi$, note that for $(X,Y)\sim C_{\pi}$, it is $Y=f(X)$ almost surely, see \cite[Theorem 2.1]{mikusinski1992shuffles}.
    Consequently, using \cite[Theorem 2.1]{ansarifuchs2022asimpleextension}, it follows that $\xi(C_\pi)=1$.
    Lastly, for the tail coefficients, note that for $t<1/n$, $C_\pi(t,t)=t$ if and only if $\pi(1)=1$ (the first segment lies on the main diagonal); otherwise $C_\pi(t,t)=0$.
    Hence, \eqref{eq:lower_tail_dependence_coefficient} yields $\lambda_L=1_{\{\pi(1)=1\}}$.
    A symmetric argument with $t>1-1/n$ gives $\lambda_U=1_{\{\pi(n)=n\}}$, establishing the tail dependence coefficients.
\end{proof}

\begin{proof}[Proof of Proposition~\ref{lem:checkerboard_measures_of_association}]
    \label{proof:checkerboard_measures_of_association}
    ~\begin{enumerate}[label=(\roman*)]
        \item Recall from \eqref{eq:rho_integral} that
        \[
        \rho_S(C) = 12 \int_{[0,1]^2} C(u,v)\de \lambda^2(u,v)-3,
        \]
        where $\lambda^2$ denotes the Lebesgue measure on $[0,1]^2$ and recall also from \eqref{frm:check_pi} that the copula $C_{\Pi}$ is given by
        \small
        \[
            C_{\Pi}(u,v)
            = \sum_{k,l=1}^{i-1,j-1} \Delta_{k,l} + \sum_{k=1}^{i-1}\Delta_{k,j} (nv-j+1) + \sum_{l=1}^{j-1}\Delta_{i,l}(mu-i+1) + \Delta_{i,j} (mu-i+1)(nv-j+1)
        \]
        \normalsize
        for $(u,v) \in \II_{i,j}$.
        Hence, with a simple substitution, it is
        \[
            \int_{\frac{i-1}{m}}^{\frac{i}{m}}\int_{\frac{j-1}{n}}^{\frac{j}{n}} C_{\Pi}(u,v) \de v\de u
            = \frac{1}{mn}\left(\sum_{k,l=1}^{i-1,j-1}\Delta_{k,l} + \frac1{2}\sum_{k=1}^{i-1}\Delta_{k,j} + \frac1{2}\sum_{l=1}^{j-1}\Delta_{i,l} + \frac1{4}\Delta_{i,j}\right)
        .\]
        Considering the full integral, each $\Delta_{i,j}$ appears in the integral of the corresponding cell and all cells with $\Delta_{i'j'}$ with $i'\ge i$ and $j'\ge j$.
        More precisely, it appears in the cells with $i'>i$ and $j'>j$ with weight $1$, in the cells with $i'>i$, $j'=j$ or $i'=i$, $j'>j$ with weight $\frac12$, and in the cell $(i,j)$ itself with weight $\frac14$.
        Consequently,
        \[
        \int_{[0,1]^2} C_{\Pi}(u,v)\de u\de v
        =\sum_{i=1}^{m}\sum_{j=1}^{n} \frac{(2m-2i+1)(2n-2j+1)}{4mn}\Delta_{i,j}
        ,\]
        and thus
        \[
        \rho_S(C_{\Pi})
        =12\sum_{i=1}^{m}\sum_{j=1}^{n}\frac{(2m-2i+1)(2n-2j+1)}{4mn}\Delta_{i,j}-3
        =3 \tr\left(\Omega^\top\Delta\right) - 3
        .\]
        For the check--min copula $C_{\nearrow}$, recall its formula from \eqref{frm:check_min}.
        It follows that
        \[
            \int_{\frac{i-1}{m}}^{\frac{i}{m}}\int_{\frac{j-1}{n}}^{\frac{j}{n}} C_{\nearrow}(u,v) \de v\de u
            = \frac{1}{mn}\left(\sum_{k,l=1}^{i-1,j-1}\Delta_{k,l} + \frac1{2}\sum_{k=1}^{i-1}\Delta_{k,j} + \frac1{2}\sum_{l=1}^{j-1}\Delta_{i,l} + \frac1{3}\Delta_{i,j}\right)
        ,\]
        and therefore
        \[
            \int_{[0,1]^2} C_{\nearrow}(u,v)\de u\de v
            =\sum_{i,j=1}^{m,n} \frac{(2m-2i+1)(2n-2j+1)}{4mn}\Delta_{i,j} + \frac1{12mn}\sum_{i,j=1}^{m,n} \Delta_{i,j}
        .\]
        Hence,
        \[
        \rho_S(C_{\nearrow})
        = \rho_S(C_{\Pi}) + \frac1{mn}
        \]
        as stated.
        Similarly, for $C_{\searrow}$, one obtains from \eqref{frm:check_w} that
        \[
            \int_{\frac{i-1}{m}}^{\frac{i}{m}}\int_{\frac{j-1}{n}}^{\frac{j}{n}} C_{\searrow}(u,v) \de v\de u
            = \frac{1}{mn}\left(\sum_{k,l=1}^{i-1,j-1}\Delta_{k,l} + \frac1{2}\sum_{k=1}^{i-1}\Delta_{k,j} + \frac1{2}\sum_{l=1}^{j-1}\Delta_{i,l} + \frac1{6}\Delta_{i,j}\right)
        ,\]
        leading to the stated result.
        \item Recall from \eqref{eq:tau_integral} that Kendall's tau for $C_{\Pi}$ is given by
        \[
            \tau(C_{\Pi}) = 1 - 4\int_{[0,1]^2} \partial_1 C_{\Pi}(u,v) \partial_2 C_{\Pi}(u,v) \de u \de v
        .\]
        By \eqref{frm:check_pi}, on each cell $\II_{i,j}$, the partial derivative $\partial_1 C_{\Pi}(u,v)$ is given by
        \[
        \partial_1 C_{\Pi}(u,v)
        = m\left(\sum_{l=1}^{j-1} \Delta_{i,l} + \Delta_{i,j} \frac{v-\frac{j-1}n}{\frac{j}n - \frac{j-1}n}\right)
        = m\left(\sum_{l=1}^{j-1} \Delta_{i,l} + \Delta_{i,j} (nv-j+1)\right)
        \]
        for $(u,v)\in\II_{i,j}$.
        An analogous expression holds for $\partial_2 C_{\Pi}(u,v)$ on each cell.
        Integrating cell-by-cell, one obtains
        \begin{align*}
            &\int_0^1\int_0^1 \partial_1 C_{\Pi}(u,v) \partial_2 C_{\Pi}(u,v) \de u \de v \\
            = \quad & \sum_{i,j=1}^{m,n} \int_{\frac{i-1}m}^{\frac{i}m} \int_{\frac{j-1}n}^{\frac{j}n} \partial_1 C_{\Pi}(u,v) \partial_2 C_{\Pi}(u,v) \de u \de v \\
            = \quad & \sum_{i,j=1}^{m,n} \left(m\int_{\frac{j-1}n}^{\frac{j}n} \sum_{l=1}^{j-1} \Delta_{i,l} + \Delta_{i,j} (nv-j+1) \de v \right) \left(n\int_{\frac{i-1}m}^{\frac{i}m} \sum_{k=1}^{i-1} \Delta_{k,j} + \Delta_{i,j} (mu-i+1) \de u \right)\allowdisplaybreaks\\
            = \quad & \sum_{i,j=1}^{m,n}\left(\frac{m}{n}\int_{0}^{1} \sum_{l=1}^{j-1} \Delta_{i,l} + \Delta_{i,j} v \de v \right) \left(\frac{n}{m}\int_{0}^{1} \sum_{k=1}^{i-1} \Delta_{k,j} + \Delta_{i,j} u \de u\right) \allowdisplaybreaks \\ 
            = \quad & \sum_{i,j=1}^{m,n} \left(\sum_{k=1}^{i-1} \Delta_{k,j} + \frac12 \Delta_{i,j}\right)\left(\sum_{l=1}^{j-1} \Delta_{i,l} + \frac12 \Delta_{i,j}\right) \allowdisplaybreaks\\
            = \quad & \frac14 \sum_{i,j=1}^{m,n} \left(\Xi^{(m)} \Delta\right)_{i,j}\left(\Xi^{(n)} \Delta^{\top}\right)_{ji} \\
            = \quad & \frac14 \sum_{i=1}^{m} \left(\Xi^{(m)} \Delta \Xi^{(n)} \Delta^\top\right)_{ii} \\
            = \quad & \frac14 \tr\left(\Xi^{(m)} \Delta \Xi^{(n)} \Delta^\top\right)
        .\end{align*}
        Hence,
        \[
            \tau(C_{\Pi})
            = 1 - 4 \int_{[0,1]^2} \partial_1 C_{\Pi}(u,v) \partial_2 C_{\Pi}(u,v) \de u \de v
            = 1 - \tr\left(\Xi^{(m)} \Delta \Xi^{(n)} \Delta^\top\right)
        \]
        In the case of $C_{\nearrow}$, note that now it is
        \[
            \partial_1 C_{\nearrow}(u,v) 
            =m\left(\sum_{l=1}^{j-1}\Delta_{i,l} + \Delta_{i,j}\1_{\{nv-j+1\geq mu-i + 1\}}\right)
        \]
        for $(u,v)\in\II_{i,j}$ and similarly for $\partial_2 C_{\nearrow}(u,v)$, so that
        \begin{align*}
            &\int_0^1\int_0^1 \partial_1 C_{\nearrow}(u,v) \partial_2 C_{\nearrow}(u,v) \de u \de v \\
            = \quad & \sum_{i,j=1}^{m,n} \int_{\frac{i-1}m}^{\frac{i}m} \int_{\frac{j-1}n}^{\frac{j}n} \partial_1 C_{\nearrow}(u,v) \partial_2 C_{\nearrow}(u,v) \de u \de v \\
            = \quad & \sum_{i,j=1}^{m,n} \int_{0}^{1}\int_{0}^{1} \left( \sum_{l=1}^{j-1} \Delta_{i,l} + \Delta_{i,j} \1_{\{v\geq u\}} \right) \left(\sum_{k=1}^{i-1} \Delta_{k,j} + \Delta_{i,j} \1_{\{u\geq v\}}\right) \de v \de u \allowdisplaybreaks\\
            = \quad & \sum_{i,j=1}^{m,n} \left(\sum_{k=1}^{i-1} \Delta_{k,j}\sum_{l=1}^{j-1} \Delta_{i,l} + \frac12\sum_{k=1}^{i-1} \Delta_{k,j}\Delta_{i,j} + \frac12\sum_{l=1}^{j-1} \Delta_{i,l} \Delta_{i,j}\right) \allowdisplaybreaks\\
            = \quad & \sum_{i,j=1}^{m,n} \left(\sum_{k=1}^{i-1} \Delta_{k,j} + \frac12 \Delta_{i,j}\right)\left(\sum_{l=1}^{j-1} \Delta_{i,l} + \frac12 \Delta_{i,j}\right) - \frac14 \sum_{i,j=1}^{m,n} \Delta_{i,j}^2 \\
            = \quad & \frac14\left( \tr(\Xi^{(m)} \Delta \Xi^{(n)} \Delta^\top) - \tr(\Delta^\top \Delta)\right)
        .\end{align*}
        Consequently,
        \[
            \tau(C_{\nearrow}) = 1 - 4\int_{[0,1]^2} \partial_1 C_{\nearrow}(u,v)\partial_2 C_{\nearrow}(u,v) \de u\de v
            = \tau(C_{\Pi}) + \tr(\Delta^\top \Delta)
        ,\]
        and a similar argument yields
        \begin{align*}
            &\int_0^1\int_0^1 \partial_1 C_{\searrow}(u,v) \partial_2 C_{\searrow}(u,v) \de u \de v \\
            = \quad & \sum_{i,j=1}^{m,n} \int_{0}^{1}\int_{0}^{1} \left( \sum_{l=1}^{j-1} \Delta_{i,l} + \Delta_{i,j} \1_{\{v\geq 1-u\}} \right) \left(\sum_{k=1}^{i-1} \Delta_{k,j} + \Delta_{i,j} \1_{\{u\geq 1-v\}}\right) \de v \de u \allowdisplaybreaks\\
            = \quad & \sum_{i,j=1}^{m,n} \left(\sum_{k=1}^{i-1} \Delta_{k,j}\sum_{l=1}^{j-1} \Delta_{i,l} + \frac12\sum_{k=1}^{i-1} \Delta_{k,j}\Delta_{i,j} + \frac12\sum_{l=1}^{j-1} \Delta_{i,l} \Delta_{i,j} + \int_0^1\int_0^1 \Delta^2_{i,j}\1_{\{u+v\geq 1\}} \de v \de u \right)\allowdisplaybreaks\\
            = \quad & \sum_{i,j=1}^{m,n} \left(\sum_{k=1}^{i-1} \Delta_{k,j} + \frac12 \Delta_{i,j}\right)\left(\sum_{l=1}^{j-1} \Delta_{i,l} + \frac12 \Delta_{i,j}\right) + \frac14 \sum_{i,j=1}^{m,n} \Delta_{i,j}^2 \\
            = \quad & \frac14\left( \tr\left(\Xi^{(m)} \Delta \Xi^{(n)} \Delta^\top\right) + \tr\left(\Delta^\top \Delta\right)\right)
        ,\end{align*}
        which shows
        \[
            \tau(C_{\searrow}) = 1 - 4\int_{[0,1]^2} \partial_1 C_{\searrow}(u,v)\partial_2 C_{\searrow}(u,v) \de u\de v
            = \tau(C_{\Pi}) - \tr(\Delta^\top \Delta)
        .\]
        \item Recall from \eqref{eq:xi} that Chatterjee's xi for a copula $C$ can be expressed as
        \[
            \xi(C)
            =6 \int_0^1\int_0^1 \left(\partial_1 C(u,v)\right)^2 \de u\de v
            -2
        .\]
        For $(u,v)\in\II_{i,j}$, we have
        \begin{align}\label{eq:partial1_xi}
            \partial_1 C_{\Pi}(u,v) 
            = m\left(\sum_{l=1}^{j-1}\Delta_{i,l} + \Delta_{i,j}(nv-j+1)\right)
        .\end{align}
        Hence, squaring and integrating in $v$, one finds
        \begin{align*}
            \int_{\tfrac{j-1}{n}}^{\tfrac{j}{n}} \left(\partial_1 C_{\Pi}(u,v)\right)^2 \de v
            \quad = \quad & \frac{m^2}{n}\int_0^1 \left(\sum_{l=1}^{j-1}\Delta_{i,l} + \Delta_{i,j} v\right)^2 \de v \\
            = \quad & \frac{m^2}{n}\left(\left(\sum_{l=1}^{j-1}\Delta_{i,l}\right)^2 +\left(\sum_{l=1}^{j-1}\Delta_{i,l}\right)\Delta_{i,j} +\tfrac13\Delta_{i,j}^2 \right) \\
            = \quad & \frac{m^2}{n}\left( (\Delta T)_{i,j}^2
            +\left(\Delta T\right)_{i,j}\Delta_{i,j}
            +\tfrac13\Delta_{i,j}^2 \right).
        \end{align*}
        Summing over the cells yields the formula for $\xi(C_{\Pi})$.
        \begin{align*}
            \xi(C_{\Pi}) 
            \quad = \quad & 6 \sum_{i,j=1}^{m,n} \int_{\tfrac{i-1}{m}}^{\tfrac{i}{m}} \int_{\tfrac{j-1}{n}}^{\tfrac{j}{n}} \left(\partial_1 C_{\Pi}(u,v)\right)^2 \de v \de u - 2 \\
            = \quad & \frac{6m}{n} \sum_{i,j=1}^{m,n}\left(\left(\sum_{l=1}^{j-1}\Delta_{i,l}\right)^2 +\left(\sum_{l=1}^{j-1}\Delta_{i,l}\right)\Delta_{i,j} +\tfrac13\Delta_{i,j}^2 \right) -2 \\
            = \quad & \frac{6m}{n} \sum_{i,j=1}^{m,n} \left(\left(\Delta T\right)_{i,j}^2 + \left(\Delta T\right)_{i,j}\Delta_{i,j} +\tfrac13\Delta_{i,j}^2\right) - 2 \\
            = \quad & \frac{6m}{n} \tr\left(\Delta^\top \Delta \left( TT^\top + T^\top + \tfrac{1}{3}I_n\right)\right) - 2 \\
            = \quad & \frac{6m}{n} \tr\left(\Delta^\top \Delta M_{\xi}\right) - 2
        \end{align*}
        In the case of $C_{\text{pd}}\in\CC^{\Delta}_{\text{pd}}$, note that with \eqref{eq:check_pd_copula} it now is
        \[
            \partial_1 C_{\text{pd}}(u,v) 
            =m\left(\sum_{l=1}^{j-1}\Delta_{i,l} + \Delta_{i,j}\1_{\{nv-j+1\geq f_{i,j}(mu-i+1)\}}\right)
        \]
        for $(u,v)\in\II_{i,j}$.
        Note further that due to $f_{i,j}$ being Lebesgue measure preserving, it is
        \[
            \iint_{[0,1]^2} \mathbf{1}\{v \ge f_{i,j}(u)\}\de v \de u
            = \int_0^1 1 - f_{i,j}(u)\de u
            = \frac12
        ,\]
        and hence
        \begin{align*}
            \int_{\tfrac{i-1}{m}}^{\tfrac{i}{m}}\int_{\tfrac{j-1}{n}}^{\tfrac{j}{n}} \left(\partial_1 C_{\text{pd}}(u,v)\right)^2 \de v \de u
            \quad = \quad & \frac{m}n \int_0^1\int_0^1 \left(\sum_{l=1}^{j-1}\Delta_{i,l} + \Delta_{i,j}\1_{\{v\geq f_{i,j}(u)\}}\right)^2 \de v \de u \\	
            = \quad & \frac{m}{n}\left(\left(\sum_{l=1}^{j-1}\Delta_{i,l}\right)^2
            +\left(\sum_{l=1}^{j-1}\Delta_{i,l}\right)\Delta_{i,j}
            +\frac12\Delta_{i,j}^2 \right)
        .\end{align*}
        Thus, one gets an extra $\frac16 \Delta_{i,j}^2$ compared to the previous case, and concludes that
        \[
            \xi(C_{\text{pd}}) = 6 \int_{[0,1]^2} \left(\partial_1 C_{\text{pd}}(u,v)\right)^2 \de u\de v - 2
            = \xi(C_{\Pi}) + \frac{m}n\sum_{i,j=1}^{m,n} \Delta_{i,j}^2
            = \xi(C_{\Pi}) + \frac{m\tr(\Delta^\top \Delta)}{n}
        .\]
        Since $C_{\nearrow}, C_{\searrow}\in \CC^{\Delta}_{\text{pd}}$, this result in particular also holds for them.
    \end{enumerate}    
    Lastly, regarding tail dependence coefficients, it is a direct and classical observation that a copula with a bounded density has no tail dependence, compare, e.g., \cite[below Remark 5.1]{mainik2015risk}.
    In particular, $\lambda_L(C_{\Pi}) = \lambda_U(C_{\Pi}) = 0$.
    For $C_{\searrow}$, since $C_{\searrow} \leq C_{\Pi}$ pointwise, it is clear from the definition of $\lambda_L$ and $\lambda_U$ in \eqref{eq:lower_tail_dependence_coefficient} and \eqref{eq:upper_tail_dependence_coefficient} that 
    \[
        0 \leq \lambda_L(C_{\searrow}) \leq \lambda_L(C_{\Pi}),
        \quad 0\leq \lambda_U(C_{\searrow}) \leq \lambda_U(C_{\Pi})
    .\]
    Hence, also $\lambda_L(C_{\searrow}) = \lambda_U(C_{\searrow}) = 0$.
    Finally, for $C_{\nearrow}$, recall its form from \eqref{frm:check_min}.
    For $t>0$ sufficiently small, it is $(t,t)\in\II_{1,1}$ and thus 
    \[
        \lambda_L(C_{\nearrow}) 
        = \lim_{t\searrow 0} \frac{C_{\nearrow}(t,t)}{t}
        = \lim_{t\searrow 0} \frac{\Delta_{1,1}\min\{nt, mt\}}{t}
        = \Delta_{1,1}(m\wedge n)
    .\]
    Similarly, for $1-t>0$ sufficiently small, it is $(t,t)\in\II_{m,n}$ and thus
    \begin{align*}
        & C_{\nearrow}(1,1) - C_{\nearrow}(t,t) \\
        = \quad & n(1-t)\sum_{k=1}^{m-1}\Delta_{k,n}+ m(1-t)\sum_{l=1}^{n-1}\Delta_{m,l} + \Delta_{m,n}\max\{n(1-t), m(1-t)\} \\
        = \quad & n(1-t)\left(\frac1n - \Delta_{m,n}\right) + m(1-t)\left(\frac1m - \Delta_{m,n}\right) + \Delta_{m,n}\max\{n(1-t), m(1-t)\} \\
        = \quad & (1-t)\left(2 - (m\wedge n)\Delta_{m,n}\right)
    .\end{align*}
    This yields
    \[
        \lambda_U(C_{\nearrow}) 
        = 2 - \lim_{t\nearrow 1} \frac{1 - C_{\nearrow}(t,t)}{1 - t}
        = 2 - \lim_{t\nearrow 1} \frac{C_{\nearrow}(1,1) - C_{\nearrow}(t,t)}{1 - t}
        = \Delta_{m,n}(m\wedge n)
    ,\]
    finishing the proof.
\end{proof}

\begin{proof}[Proof of Corollary \ref{cor:xi_checkerboard}]
    Note that
    \[
        \sum_{i,j=1}^{m,n} \Delta_{i,j}^2
        \leq \sum_{i=1}^m \left(\sum_{j=1}^n \Delta_{i,j}\right)^2
        = \sum_{i=1}^m\frac1{m^2}
        = \frac{1}{m}
    \]
    and in the same way
    \[
        \sum_{i,j=1}^{m,n} \Delta_{i,j}^2 \leq \frac{1}{n}
    .\]
    Hence, by Proposition \ref{lem:checkerboard_measures_of_association} \ref{itm:xi_checkerboard}, we have
    \[
        \abs{\xi\left(C^\Delta_{\nearrow}\right) 
        - \xi\left(C^\Delta_{\Pi}\right)}
        = \frac{m\tr(\Delta^\top \Delta)}{n}
        = \frac{m}{n}\sum_{i,j=1}^{m,n} \Delta_{i,j}^2
        \le \frac{m}{n(m\vee n)}
        = \begin{cases}
            \frac{m}{n^2}, & \text{if } m\le n \\
            \frac{1}{n}, & \text{if } m > n
        \end{cases}
    ,\]
    as claimed.
\end{proof}

\bibliography{Literature}{}
\bibliographystyle{plain}

\end{document}